\newcommand{\Leb}{\mathrm{Leb}}
\def\loc{\mathrm{loc}}
\theoremstyle{plain}
\newcommand\n[1]{\Vert #1 \Vert}
\newtheorem{theorem}{Theorem}[section]
\newtheorem{proposition}[theorem]{Proposition}
\newtheorem{assumption}[theorem]{Assumption}
\newtheorem{defi}[theorem]{Definition}
\newtheorem{lemma}[theorem]{Lemma}
\newtheorem{remark}[theorem]{Remark}
\newtheorem{acknowledgements}{Acknowledgements}
\newtheorem{coro}{Corollary}[section]
\begin{document}

\baselineskip 16pt
\numberwithin{equation}{section}
\title[Maximal Inequalities for stochastic convolutions]
{Maximal inequalities for Stochastic convolutions driven by compensated
Poisson random measures in Banach spaces}

\author[Jiahui Zhu, Z. Brze\'{z}niak and E. Hauseblas]{{\bf Jiahui Zhu$\sharp\ddag$,  Zdzis{\l}aw Brze\'{z}niak$^\star$ and
 Erika Hausenblas$^\dag$
} \\
\\  $\hspace{10cm}$
\\
{\scriptsize{$^\star$ Department of Mathematics, University of York, Heslington, York, YO10 5DD, UK}\\
 \scriptsize{$^\dag$ Montanuniversity Leoben
Franz Josefstr 18,
8700 Leoben, Austria}\\
\scriptsize{$^\sharp$ College of Science, Zhejiang University of Technology, 310014 Hangzhou, China}\\
\scriptsize{$^\ddag$ The corresponding author.} }
}

\date{\today}
\begin{abstract}
Let $(E, \| \cdot\|)$ be a Banach space such that, for some $q\geq 2$, the  function $x\mapsto \|x\|^q$ is of $C^2$ class and its first and second Fr\'{e}chet derivatives are bounded by some constant multiples of $(q-1)$-th power of the norm and $(q-2)$-th power of the norm and let $S$ be a $C_0$-semigroup of contraction type  on $(E, \| \cdot\|)$.
We consider the following stochastic
convolution process
      \begin{align*}
   u(t)=\int_0^t\int_ZS(t-s)\xi(s,z)\,\tilde{N}(\mathrm{d} s,\mathrm{d} z), \;\;\; t\geq 0,
\end{align*}
where
$\tilde{N}$ is a compensated Poisson random measure on a   measurable space $(Z,\mathcal{Z})$  and $\xi:[0,\infty)\times\Omega\times Z\rightarrow E$
is an  $\mathbb{F}\otimes \mathcal{Z}$-predictable function.
We prove that there exists   a c\`{a}dl\`{a}g  modification a $\tilde{u}$ of the process $u$ which satisfies  the following
maximal inequality
\begin{align*}
      \mathbb{E} \sup_{0\leq s\leq t} \|\tilde{u}(s)\|^{q^\prime}\leq C\ \mathbb{E} \left(\int_0^t\int_Z \|\xi(s,z) \|^{p}\,N(\mathrm{d} s,\mathrm{d} z)\right)^{\frac{q^\prime}{p}},
\end{align*}
for all $ q^\prime \geq q$ and  $1<p\leq 2$ with $C=C(q,p)$.
\end{abstract}

\keywords{Stochastic convolution, martingale type $p$ Banach space,
Poisson random measure.}
\subjclass{60H15 (60F10 60H05 60G57 60J75)}

\maketitle

\section{Introduction}
\label{sec-intro} Maximal inequalities
 for stochastic convolutions in the setting of Hilbert spaces or finite dimensional spaces have received considerable attention for many years. Ichikawa \cite{[Ichikawa]} considered maximal inequalities for
 $C_0$-semigroups of  contractions and right continuous martingales in
Hilbert spaces,  see also
Tubaro \cite{[Tubaro]}.
  A submartingale type inequality for stochastic convolutions of $C_0$-semigroups of contractions and
 square integrable martingales, also  in Hilbert spaces, was obtained
 by Kotelenez \cite{[Kotelenez]}. Kotelenez proved
 the existence of a c\`{a}dl\`{a}g version of the stochastic convolution process for square integrable
  c\`{a}dl\`{a}g martingales.  In a paper by the second named author and Peszat \cite{[Brz+Pesz_2000]}, the authors
  established  a maximal inequality in a certain class of Banach spaces for the
  stochastic convolution process driven by a Wiener process. Recently, this maximal inequality was generalized by van Neerven and the first named author to $C_0$-contraction semigroups on 2-smooth Banach spaces. Since many results obtained in the Wiener case may fail in pure jump type models, maximal inequalities for compensated Poisson random measures deserve an independent investigation. Here we extend the results from \cite{[Brz+Pesz_2000]} to the case
  where the stochastic convolution is driven by a compensated Poisson random measure. We work in the framework of stochastic integrals and convolutions driven by compensated Poisson random measures recently introduced by the second and third authors in \cite{[Brz+Haus_2009]}.

 Let us now briefly present the content of the paper.
 In the first section (i.e. section \ref{sec-si}) we set up notations and terminologies and then summarize without proofs some of the standard facts on stochastic integrals with values in martingale type $p$, $p\in (1,2]$, Banach spaces, driven by a  compensated Poisson random measure $\tilde{N}$ on a   measurable space $(Z,\mathcal{Z})$. Section \ref{sec-sc} is devoted to the study of the stochastic convolution process $(u(t))_{t \geq 0}$ driven by  $\tilde{N}$ defined by the following formula
 \begin{align}\label{SC}
\begin{split}
    u(t)&=\int_0^t \int_Z S(t-s)\xi(s,z)\,\tilde{N}(\mathrm{d} s,\mathrm{d} z),\ \ t \geq 0,\\
    \end{split}
\end{align}
where $S(t)$, $t\geq 0$ is a $C_0$-contraction semigroup (with the infinitesimal generator $A$) on a
martingale type $p$, $p\in (1,2]$, Banach space $E$  and $\xi:[0,\infty)\times\Omega\times Z\rightarrow E$
is an  $\mathbb{F}\otimes \mathcal{Z}$-predictable function such that for all $T>0$, $
     \int_0^T\int_Z\mathbb{E} |f(t,z)|^p_{E}\,\nu(\mathrm{d} z)\,\mathrm{d} t<\infty$. 
 In particular, we show that there exists a predictable version of the stochastic convolution process $u$. Under some suitable assumptions  we show that the process  $u$ is a unique strong solution to the following stochastic evolution equation
 \begin{align}\label{SDE}
\begin{split}
    du(t)&=Au(t)\,\mathrm{d} t+\int_Z\xi(t,z)\,\tilde{N}(\mathrm{d} t,\mathrm{d} z),\ \ t \geq 0,\\
    u(0)&=0.
    \end{split}
\end{align}
 In  section \ref{sec-main}  we present our main results. In particular,  the maximal inequalities are stated and proved when the $q$-th power, for some $q\geq p$, of some equivalent  norm on $E$ is of  $C^2$ class and its first and second Fr\'{e}chet derivatives are bounded by some constant multiples of $(q-1)$-th power of the norm and $(q-2)$-th power of the norm and $S(t)$, $t\geq 0$ is a $C_0$-contraction semigroup on $E$ with respect to this equivalent norm.  For the readers convenience let us state this result.

\begin{theorem}\label{theorem-main}
Suppose that $E$ is a real separable Banach space satisfying
Assumption \ref{assu-01} and   $S(t)$, $t\geq 0$ is a $C_0$ semigroup on $E$
satisfying
Assumption \ref{assu-02}.
 In the above described framework,  there exists a separable and c\`{a}dl\`{a}g
modification $\tilde u$ of the process $u$ defined by formula \eqref{sc-01}. Moreover, for every    $q^\prime\geq q$, where $p$ and  $q$ are  the numbers from Assumption \ref{assu-01}, there exists a  constant $C$ independent of the process $\xi$, such that for
every  stopping time $\tau>0$ and every $t>0$,
\begin{align}\label{ineq-main_intro}
      \mathbb{E}\sup_{0\leq s\leq t\wedge\tau}|\tilde u(s)|_E^{q^\prime}\leq C\ \mathbb{E}\left(\int_0^{t\wedge\tau}\int_Z|\xi(s,z)|_E^{p}\,N(\mathrm{d} s,\mathrm{d} z)\right)^{\frac{q^\prime}{p}}, \; t \geq 0.
\end{align}
\end{theorem}

Note that under above assumption on $E$, it can be shown, see Appendix \ref{sec-appendix},  that the Banach space $E$ satisfying the above condition is of martingale type $p$, for all $p\in(1,2]$. Hence both the stochastic integral and the stochastic convolution process are well defined.

Let us point out an important consequence of the above result. Namely,  if the right-hand side above is finite, then the stochastic convolution process $u$ admits an c\`adl\`ag modification,  see \cite{[Brz+5]} and a positive result given in \cite{[Liu+Zhai_2012]} for somehow related results in the Hilbert space framework.

 In the last part of section \ref{sec-main} we formulate and prove a different version of the maximal inequality. To be more precise, if $p \in [\sqrt{2},2]$ and $n\geq [\frac{\ln q}{\ln p}]$, then
for
every  stopping time $\tau>0$,
\begin{align}\label{inequality_13-intro}
      \mathbb{E}\sup_{0\leq s\leq t \wedge \tau }|\tilde{u}(s)|_E^{p^n}\leq C\
     \sum_{k=1}^n \mathbb{E}\left(\int_0^{t \wedge \tau}\int_Z |\xi(s,z)|_E^{p^k}\,\nu (\mathrm{d} z) \mathrm{d} s\right)^{{p^{n-k}}},\ t \geq 0.
\end{align}

In a brief section \ref{sec-pm}, we present extensions of the previous result to progressively measurable integrands.

\begin{remark}\label{rem-Nagy}
It is  possible to prove inequality \eqref{SC} by the method based on the Szek{\"{o}}falvi-Nagy Theorem on unitary dilations,  used earlier in \cite{[Haus+Seidler]}, see inequality (4) therein. The latter result has recently been generalized  to Banach spaces of finite cotype by Fr\"ohlich and Weis \cite{[Fr+Weis]}. However, this method  works only for analytic semigroups of contraction type. The results from the current paper are valid for all  $C_0$-semigroups of contraction type.
To be more precise, if  $E$ is a Banach space of martingale type $p$ , with $1<p\leq 2$, and  $A$ generates an analytic semigroup of contraction type on $E$, then,  by following almost  the  same lines as  in \cite{[Haus+Seidler]},  one can prove that for every $T>0$  there exists a constant $C>0$ such that for all progressively measurable processes $\xi$
\begin{align*}
      \mathbb{E}\sup_{0\leq s\leq t}|\tilde{u}(s)|_E^{q^\prime}\leq C\ \mathbb{E}\left(\int_0^t\int_Z|\xi(s,z)|_E^{p}\,N(\mathrm{d} s,\mathrm{d} z)\right)^{\frac{q^\prime}{p}},\; t \in [ 0,T].
\end{align*}
\end{remark}
Let us finish this Introduction by commenting that the results presented are applicable to nonlinear SPDEs, e.g. stochastic Euler Equations. In the case of similar problems with the Gaussian noise, the paper \cite{[Brz+Pesz_2000]} on which to a large extent our current research is based on, was in some sense a byproduct of a previous study by the same authors for stochastic Euler Equations in \cite{[Brz+Pesz_2001]}. It turns out that applications to stochastic Navier-Stokes  Equations of our paper even before it's publication have been found in a recent paper by Fernando et al. \cite{Fernando+Sritharan_2010}. For related results for stochastic reaction diffusion equations obtained by different approach one can consult a paper \cite{marinelli} by Marinelli and R\"ockner.

 \section{Stochastic integral}\label{sec-si}

Let
$(\Omega,\mathcal{F},\mathbb{F},\mathbb{P})$, where $\mathbb{F}=(\mathcal{F}_t)_{t\geq0}$,
be a filtered probability space satisfying the usual hypothesis. Let
$(S,\mathcal{S})$ be a measurable space. We write
$\mathbb{N}$ for the set of all natural numbers and set
$\bar{\mathbb{N}}=\mathbb{N}\cup\{\infty\}$. We denote by
$\mathbb{M}_{\bar{\mathbb{N}}}(S)$ the space of all
$\bar{\mathbb{N}}$-valued measures on $(S,\mathcal{S})$ and
$\mathcal{B}(\mathbb{M}_{\bar{\mathbb{N}}}(S))$ the smallest
$\sigma$-field on $\mathbb{M}_{\bar{\mathbb{N}}}(S)$ with respect to
which all the mapping
$i_B:\mathbb{M}_{\bar{\mathbb{N}}}(S)\ni\mu\mapsto\mu(B)\in\bar{\mathbb{N}}$,
$B\in\mathcal{S}$, are measurable.
\begin{defi}\label{defi: poisson random measure}
  A Poisson random measure  on $(S,\mathcal{S})$ over $(\Omega,\mathcal{F},\mathbb{F},\mathbb{P})$ is a map $N:\Omega\rightarrow \mathbb{M}_{\bar{\mathbb{N}}}(S)$
   such that the  family $\{N(B): B\in \mathcal{S}\}$  of $\mathcal{F}$-measurable $\bar{\mathbb{N}}$-valued functions defined by $N(B):=i_B\circ N:\Omega\rightarrow\bar{\mathbb{N}}$  satisfies  the following conditions
  \begin{trivlist}
  \item[(1)]for any $B\in\mathcal{S}$ such that $\eta(B):=\mathbb{E}(N(B))<\infty$, $N(B)$ is a Poisson random variable with parameter $\eta(B)$, i.e.
            \begin{align*}
                 \mathbb{P}(N(B)=n)=e^{-\eta(B)}\frac{\eta(B)^n}{n!},\ \ \ \ n=0,1,2,\cdots;
            \end{align*}
  \item[(2)](independently scattered property) for any pairwise disjoint sets $B_1,\cdots,B_n\in\mathcal{S}$ such that $\eta(B_i)<\infty$, $i=1,\cdots,n$, the random variables
         $$N(B_1),\ \cdots,\ N(B_n)$$
         are independent.
  \end{trivlist}
\end{defi}

\begin{remark} In what follows we will often assume that $T\in(0,\infty)\cup\{\infty\}$. Then, if   $T=\infty$,  by $[0,T]$ we mean the half-line $[0,\infty)$ and $\mathcal{F}_{T}$ stands for $\mathcal{F}$. Similarly,  for $a<\infty$, $(a,\infty]$ (respectively $[a,\infty]$) stands for $(a,\infty)$ (respectively $[a,\infty)$).
\end{remark}

\begin{defi}\label{def-predictabilility} Let us fix $T\in(0,\infty)\cup\{\infty\}$.
Let $\mathcal{P}$ denote the $\sigma$-field on $[0,T]\times \Omega$ generated by all left-continuous and $\mathbb{F}$-adapted real valued processes. We call $\mathcal{P}$ the predictable $\sigma$-field.\\
Assume that $(Z,\mathcal{Z})$ is a measurable space. Let $\hat{\mathcal{P}}$ denote the $\sigma$-field on
$[0,T]\times\Omega\times Z$ generated by all functions
$g:[0,T]\times\Omega\times Z\rightarrow \mathbb{R}$ satisfying the following
properties
\begin{enumerate}
    \item[(1)] for every $t \in [0,T]$, the mapping $(\omega,z)\mapsto g(t,\omega,z)$ is $\mathcal{F}_t\otimes\mathcal{Z}/\mathcal{B}(\mathbb{R})$-measurable,
    \item[(2)] for every $(\omega,z)$, the path $t\mapsto g(t,\omega,z)$ is left-continuous.
\end{enumerate}
We say that an $E$-valued process $g:[0,T]\times\Omega\rightarrow E$ is predictable if it is $\mathcal{P}/\mathcal{B}(E)$-measurable.\\
We say that a function $f:[0,T]\times\Omega\times Z\rightarrow E$ is
$\mathbb{F}\otimes \mathcal{Z}$-predictable if it is
$\hat{\mathcal{P}}/\mathcal{B}(E)$-measurable.
\end{defi}

\begin{remark}\label{rem-predictable sigma field} The predictable $\sigma$-field $\mathcal{P}$ is also generated by the family $\mathcal{R}$ (see for instance Th. 3.3 in \cite{[Metivier]}) defined by
	   \begin{align*}
	           \mathcal{R}=\{\{0\}\times F:F\in\mathcal{F}_0\}\cup\{(s,t]\times F: F\in\mathcal{F}_s,0\leq s<t, t \in [0, T]\}.
	\end{align*}
	The sets belonging to the family $\mathcal{R}$ are usually called predictable rectangles.
   Similarly, one can show, see \cite{Zhu_2010_PhD thesis},  that the $\mathbb{F}\otimes \mathcal{Z}$-predictable $\sigma$-field $\hat{P}$ is generated by a family $\hat{R}$
	  \begin{align*}
	           \hat{\mathcal{R}}=\{\{0\}\times F\times B:F\in\mathcal{F}_0,B\in\mathcal{Z}\}\cup\{(s,t]\times F\times B: F\in\mathcal{F}_s,B\in\mathcal{Z},0\leq s<t\leq T\}.
	   \end{align*}
Note that a function $f:[0,T]\times\Omega\times Z\rightarrow E$ which is now called
$\mathbb{F}\otimes \mathcal{Z}$-predictable,  in \cite{Zhu_2010_PhD thesis} was called $\mathbb{F}$-predictable. We believe that our current terminology is more natural.
\end{remark}

Suppose that $(Z,\mathcal{Z})$ is a measurable space and $\nu$ is a non-negative $\sigma$-finite measure on it. Let $\Leb$ be the Lebesgue measure on $(\mathbb{R}_+,\mathcal{B}(\mathbb{R}_+))$. According to \cite{[Sato]}, there exists a Poisson random measure $N$ on $(\mathbb{R}_+\times Z,\mathcal{B}(\mathbb{R}_+)\otimes\mathcal{Z})$ with the parameter $\eta(B)=\mathbb{E} N(B)=\Leb\otimes\nu(B)$, for $B\in\mathcal{B}(\mathbb{R}_+)\otimes\mathcal{Z}$. In particular, $\eta (I \times A) = \Leb (I) \nu (A)$, for $ I \in \mathcal{B} ({\mathbb{R}}_{+})$  and $A \in \mathcal{Z}$. Here as usual we shall employ the notation
\begin{align*}
    \tilde{N}=N-\Leb\otimes\nu
\end{align*}
to denote the compensated Poisson random measure of $N$.

For $T\in (0,\infty)\cup \{\infty\}$, let ${\mathcal{M}^p([0,T]\times Z; \hat{\mathcal{P}};E)}$ denote the
linear space consisting of (equivalence classes of) all $\mathbb{F}\otimes \mathcal{Z}$-predictable functions
$f:[0,T]\times\Omega\times Z\rightarrow E$ such that
\begin{align}\label{integrability}
     \int_0^T\int_Z\mathbb{E} |f(t,z)|^p_{E}\,\nu(\mathrm{d} z)\,\mathrm{d} t<\infty.
\end{align}
In other words, ${\mathcal{M}^p([0,T]\times Z; \hat{\mathcal{P}};E)}$
is the usual $L^p$ space of $E$-valued functions on $[0,T]\times\Omega\times Z$ with respect to the $\sigma$-field $\hat{\mathcal{P}}$ and the measure
$\Leb\otimes\mathbb{P}\otimes\nu$.  By $\mathcal{M}^p_{\loc}([0,\infty)\times Z; \hat{\mathcal{P}};E)$ we denote a
linear space consisting of all $\mathbb{F}\otimes \mathcal{Z}$-predictable functions
$f:[0,\infty)\times\Omega\times Z\rightarrow E$ such that condition \eqref{integrability} is satisfied for all $T>0$.

Till the end of  this section, we will  briefly sketch how one  constructs,  the
integral
\begin{align*}
         \int_0^T\int_Zf(t,z)\,\tilde{N}(\mathrm{d} t,\mathrm{d} z), \mbox{ for every function }f\in {\mathcal{M}^p([0,T]\times Z; \hat{\mathcal{P}};E)}.
\end{align*}
This integral we shall call the stochastic integral with respect to the
compensated Poisson random measure $\tilde{N}$. Full details of the definition can be found in \cite{Zhu_2010_PhD thesis}.

\begin{defi}\label{def-step-function}A function $f:[0,T]\times\Omega\times Z\rightarrow E$ is called a step function if there exists a finite sequence of numbers $0=t_0< t_1<\cdots<t_n=T$ and a finite family $A_{j-1}^k$, $j=1,\cdots,n$, $k=1,\cdots,m$, of sets from
$\mathcal{Z}$ with $\nu(A_{j-1}^k)<\infty$  such that
   \begin{align}\label{step function}
       f(t,\omega,z)=\sum_{k=1}^{m}\sum_{j=1}^{n}\xi^{k}_{j-1}(\omega)1_{(t_{j-1},t_j]}(t)1_{A^k_{j-1}}(z), \ \ (t,\omega,z)\in[0,T]\times\Omega\times Z,
   \end{align}
   where $\xi^k_{j-1}$ is an $E$-valued $\mathcal{F}_{t_{j-1}}$-measurable random variable, for every $j=1,\cdots,n$ and $k=1,\cdots,m$, and for each $j=1,\cdots,n$, the sets $A_{j-1}^k$, $k=1,\cdots,m,$ are pairwise disjoint.
\end{defi}
Note that each step function as defined in Definition \ref{def-step-function} is $\hat{\mathcal{P}}/\mathcal{B}(E)$-measurable. In other words, every step function is $\mathbb{F}\otimes \mathcal{Z}$-predictable.
   The class of all step functions satisfying \eqref{integrability} will be
denoted by
$\mathcal{M}^p_{step}([0,T]\times Z;\hat{\mathcal{P}};E)$.

\begin{defi} Assume that $f$ is a step function in $\mathcal{M}^p_{step}([0,T]\times Z;\hat{\mathcal{P}};E)$ of the form \eqref{step function}. If $t \in [0,T]$, then the  stochastic integral over the interval $[0,t]$ of a step function $f$ in $\mathcal{M}^p_{step}([0,T]\times Z;\hat{\mathcal{P}};E)$ of the form \eqref{step function} with respect to $\tilde{N}$  is a random variable $I_t(f)$, defined by
  \begin{align*}
      I_t(f):=\sum_{k=1}^{m}\sum_{j=1}^{n}\xi^k_{j-1}(\omega)\tilde{N}((t_{j-1}\wedge t,t_j\wedge t]\times A_{j-1}^k).
  \end{align*}
\end{defi}
Note that, for every $f \in
\mathcal{M}^p_{step}([0,T]\times Z;\hat{\mathcal{P}};E)$,
$I_t(f)$   is
linear with respect to $f$ and satisfies the following inequality, see  Lemma C.2 in \cite{[Brz+Haus_2009]} and, for related results, a recent paper \cite{Dirksen_2014} by Dirksen,
\begin{align}\label{eq-11}
            \mathbb{E} \left|I_t(f)\right|_E^p\leq C\,\mathbb{E} \int_0^t\int_Z|f(s,z)|^p_E\,\nu(\mathrm{d} z)\,\mathrm{d} s,
\end{align}
where $C$ is the same
constant as the one in the martingale-type-$p$ property of the space
$E$. Moreover, the process $I_t(f)$, $t \in[0,T]$
is an $E$-valued, mean $0$ and  c\`{a}dl\`{a}g  $\mathbb{F}$-martingale.

 Let us describe now how this definition can be extended to all functions in ${\mathcal{M}^p([0,T]\times Z; \hat{\mathcal{P}};E)}$.
Take $f\in{\mathcal{M}^p([0,T]\times Z; \hat{\mathcal{P}};E)}$. Then we
can find a sequence
$\{f^n\}_{n=1}^\infty$ of functions in $\mathcal{M}^p_{step}([0,T]\times Z;\hat{\mathcal{P}};E)$, see Theorem 3.2.23 in \cite{Zhu_2010_PhD thesis},
such that
\begin{align}\label{sec-2-eq-21}
         \mathbb{E} \int_0^{T}\int_Z|f(t,\omega,z)-f^n(t,\omega,z)|^p_E\,\nu(\mathrm{d} z)\,\mathrm{d} t\rightarrow 0,\ \ \text{as}\
         n\rightarrow\infty.
\end{align}
It follows from \eqref{eq-11} that
\begin{align*}
         \mathbb{E} \left|I_T(f^n)-I_T(f^m)\right|_E^p\leq C\mathbb{E} \int_0^T\int_Z|f^n(s,z)-f^m(s,z)|^p_E\,\nu(\mathrm{d} z)\,\mathrm{d} s\rightarrow0,
\end{align*}
as $n,m\rightarrow\infty$. In other words, $\{
I_T(f^n)\}_{n=1}^{\infty}$ is a Cauchy sequence in
$L^p(\Omega,E,\mathcal{F}_T)$. Thus this sequence
$\{
I_T(f^n)\}_{n=1}^{\infty}$
of random variables will converge in
$L^p(\Omega,\mathcal{F}_T;E)$ to some particular random variable
which we shall denote by $I_T(f)$ or $\int_0^T\int_Zf(s,z)\tilde{N}(ds,dz)$. Moreover, it does not depend on the choice of the sequence $\{f^n\}_{n=1}^\infty$ of
approximating step functions. We usually call $I_T(f)$ the
stochastic integral of $f$ with respect to the compensated Poisson
random measure $\tilde{N}$. For $0\leq a\leq b\leq T$,
$B\in\mathcal{Z}$ and $f\in{\mathcal{M}^p([0,T]\times Z; \hat{\mathcal{P}};E)}$, since
$1_{(a,b]}1_Bf$ is also in ${\mathcal{M}^p([0,T]\times Z; \hat{\mathcal{P}};E)}$, we can
define the stochastic integral from $a$ to $b$ of the function
$f\in{\mathcal{M}^p([0,T]\times Z; \hat{\mathcal{P}};E)}$ by
\begin{align*}
      I_{a,b}^B(f)=\int_a^b\int_Bf(s,z)\,\tilde{N}(\mathrm{d} s,\mathrm{d} z)=I_T(1_{(a,b]}1_Bf).
\end{align*}
For simplicity, we denote
     $ I_{t}(f):=I_T(1_{(0,t]}f)$, for $t\in(0,T]$ and set $I_t(f)=0$, when $t=0$.
Let $f\in{\mathcal{M}^p([0,T]\times Z; \hat{\mathcal{P}};E)}$. It was shown in \cite{[Brz+Haus_2009]} and \cite{Zhu_2010_PhD thesis} (see also \cite{Rudiger_2004} for the case $p=2$) that
the process $I_t(f)$, $t \in[0,T]$ is a c\`{a}dl\`{a}g
$p$-integrable $\mathbb{F}$-martingale with mean $0$. In particular, $I_t(f)$
has a modification which has $\mathbb{P}$-a.s.\ c\`{a}dl\`{a}g
trajectories and satisfies the following inequality
\begin{align}\label{Ito-isometry}
    \mathbb{E} |I_t(f)|^p_E=\mathbb{E} \big|\int_0^t\int_Zf(s,z)\,\tilde{N}(\mathrm{d} s,\mathrm{d} z)\big|_E^p\leq C\,\mathbb{E} \int_0^t\int_Z|f(s,z)|^p_E\,\nu(\mathrm{d} z)\,\mathrm{d} s.
\end{align}
From now on, while considering the stochastic process
$\int_0^t\int_Zf(s,z)\tilde{N}(ds,dz)$, $t \in[0,T]$,
we will assume that it has $\mathbb{P}$-a.s. c\`{a}dl\`{a}g trajectories.

\begin{remark}\label{rem-predictable versus progressively measurable}
We have defined the stochastic integral for integrands belonging to the class \\${\mathcal{M}^p([0,T]\times Z; \hat{\mathcal{P}};E)}$ of predictable processes. One should note that in the paper  \cite{[Brz+Haus_2009]} by the second and third authors, the integral is defined for an analogous class of progressively measurable processes. See also \cite{Rudiger_2004}. This approach is also discussed in section \ref{sec-pm} of the present paper.
\end{remark}

            If $\tau$ is a stopping time with $\mathbb{P}\{\tau\leq T\}=1$, we may set, for $\omega\in\Omega$,
\begin{align}\label{sec-2-eq-24}
	I_{\tau}(f)(\omega)=I_t(f)(\omega)\ \ \text{with }t=\tau(\omega).
\end{align}
Analogously, we shall also use the notation $I_{\tau}(f)=:\int_0^{\tau}\int_Zf(s,z)\tilde{N}(ds,dz)$. In this case, one can show that
\begin{align}\label{sec-2-stopped-SI}
      \int_0^{\tau}\int_Zf(s,z)\,\tilde{N}(\mathrm{d} s,\mathrm{d} z)=\int_0^{T}\int_Z1_{(0,\tau]}(s)f(s,z)\,\tilde{N}(\mathrm{d} s,\mathrm{d} z),\ \mathbb{P}\text{-a.s.}.
\end{align}

\begin{remark}\label{rem-stopped integral}
Note that since $\tau$ is a stopping time (without any additional property), the random process \[1_{(0,\tau]}:[0,\infty)\times \omega \ni (s,\omega)\mapsto 1_{(0,\tau(\omega]}(s) \in [0,1] \] is predictable, see the comment after \cite[Definition IV.5.3]{Revuz+Yor_1999} or Proposition 4.6 in \cite{[Metivier]}. Hence, provided that the process $f$ belongs to ${\mathcal{M}^p([0,T]\times Z; \hat{\mathcal{P}};E)}$, the process $1_{(0,\tau]}f$ belongs to that space as well. In particular, the integral on the RHS of
\eqref{sec-2-stopped-SI} is well defined.
\end{remark}

Indeed, it  can be easily verified that for a step function $f^n$ of the form \eqref{step function} one has
\begin{align}\label{sec-2-eq-26}
	    \int_0^T\int_Z1_{(0,\tau]}(s)f^n(s,z)\,\tilde{N}(\mathrm{d} s,\mathrm{d} z)=\int_0^{\tau}\int_Zf^n(s,z)\,\tilde{N}(\mathrm{d} s,\mathrm{d} z),\ \mathbb{P}\text{-a.s.}
\end{align} Take $f\in {\mathcal{M}^p([0,T]\times Z; \hat{\mathcal{P}};E)} $. Then as we discussed before, there exists an $\mathcal{M}^p_{step}((0,T]\times Z;\hat{\mathcal{P}};E)$-valued sequence $\{f^n\}$ such that
\begin{align*}
         \mathbb{E} \int_0^{T}\int_Z1_{(0,\tau]}(s)|f(t,z)-f^n(s,z)|^p_E\,\nu(\mathrm{d} z)\mathrm{d} t\rightarrow 0,\ \ \text{as}\
         n\rightarrow\infty.
\end{align*}
By applying inequality \eqref{Ito-isometry}, we infer
\begin{align*}
	     \lim_{n\rightarrow\infty} \mathbb{E} \Big|\int_0^T\int_Z1_{(0,\tau]}(s)f^n(s,z)\,\tilde{N}(\mathrm{d} s,\mathrm{d} z)-\int_0^T\int_Z1_{(0,\tau]}(s)f(s,z)\,\tilde{N}(\mathrm{d} s,\mathrm{d} z)\Big|^p_E=0.
	\end{align*}
	  Hence, we can extract a subsequence (denoted again by the same notation for simplicity) such that
	 \begin{align}\label{sec-2-eq-22}
		\lim_{n\rightarrow\infty}\int_0^T\int_Z1_{(0,\tau]}(s)f^n(s,z)\,\tilde{N}(\mathrm{d} s,\mathrm{d} z)=\int_0^T\int_Z1_{(0,\tau]}(s)f(s,z)\,\tilde{N}(\mathrm{d} s,\mathrm{d} z),\ \mathbb{P}\text{-a.s.}
		\end{align}

   On the other hand, since $I_t(f)-I_t(f^n)$ is a right-continuous martingale,  $|I_t(f)-I_t(f^n)|_E$ is a real-valued submartingale. Since $\mathbb{P}(\tau\leq T)=1$, by the stopped Doob inequality, we have
	\begin{align*}
		 \mathbb{E} |I_{\tau}(f)-I_{\tau}(f^n)|_E^p\leq  (\frac{p}{p-1})^p\,\mathbb{E} |I_T(f)-I_T(f^n)|^p_E\rightarrow0,\ \text{as }n\rightarrow\infty .
	\end{align*}
It follows (by selecting a further subsequence) that
	\begin{align}\label{sec-2-eq-23}
		\lim_{n\rightarrow\infty}\int_0^{\tau}\int_Zf^n(s,z)\,\tilde{N}(\mathrm{d} s,\mathrm{d} z)=\int_0^{\tau}\int_Zf(s,z)\,\tilde{N}(\mathrm{d} s,\mathrm{d} z),\ \mathbb{P}\text{-a.s.}
		\end{align}
 Hence, \eqref{sec-2-stopped-SI} follows directly from \eqref{sec-2-eq-26}, \eqref{sec-2-eq-22} and \eqref{sec-2-eq-23}.	
	
In particular, for every $t\in[0,T]$ and any stopping time $\tau$ with $\mathbb{P}(\tau<\infty)=1$, we have
\begin{align}\label{lateron}
      \int_0^{t\wedge\tau}\int_Zf(s,z)\,\tilde{N}(\mathrm{d} s,\mathrm{d} z)=\int_0^{t}\int_Z1_{(0,\tau]}(s)\, f(s,z)\,\tilde{N}(\mathrm{d} s,\mathrm{d} z).
\end{align}

    We will also need the following result about the integral $\int_0^t\int_Z f(s,\omega,z) N(\mathrm{d} s,\mathrm{d} z)(\omega)$, which is defined, for every  $\omega\in\Omega$, as Bochner integral with respect to measure $N(\mathrm{d} s,\mathrm{d} z)(\omega)$ on $[0,t]\times Z$.

Later on we will use in the proof the notion of point processes (see \cite{[Ikeda]}). For more details we refer the reader to \cite{[Brz+Liu+Zhu]} or \cite{Zhu_2010_PhD thesis}.
Let us assume that $\pi$ is a stationary Poisson point process on $(Z,\mathcal{Z})$ with the intensity measure $\nu$, see  \cite[Theorem 54]{[Rong]} for the existence of such a  process.
For simplicity of notation, the Poisson random measure associated to the Poisson point process $\pi$ will be still denoted by $N$. We use the notation $\tilde{N}(t,A)=N(t,A)-t\nu(A)$, $t\geq0$, $A\in\mathcal{Z}$ to denote its compensated Poisson random measure.

\begin{proposition} \label{prop-sec-2}
   If $T\in (0,\infty)\cup \{\infty\}$ and $f:[0,T]\times\Omega\times Z\rightarrow E$ is a $\mathcal{B}([0,T])\otimes\mathcal{F}_T\otimes\mathcal{Z}$-measurable function and
\begin{align}\label{sec-2-prop-eq-1}
	 \mathbb{E} \int_0^T\int_Z|f(s,z)|_E\,N(\mathrm{d} s,\mathrm{d} z)<\infty,
\end{align} then we have for every $t\in [0,T]$,
\begin{align}\label{sec-2-eq-30}
	\int_0^t\int_Z f(s,\omega,z) N(\mathrm{d} s,\mathrm{d} z)(\omega)=\sum_{s\leq t}f(s,\omega,\pi(s,\omega)),\ \mathbb{P}-a.s.
\end{align}
\end{proposition}
\begin{proof}
	Since $f$ is $\mathcal{B}([0,T])\otimes\mathcal{F}_T\otimes\mathcal{Z}$-measurable, for every $\omega\in\Omega$, $f(\cdot,\omega,\cdot)$ is $\mathcal{B}([0,T])\otimes\mathcal{Z}$-measurable. By \eqref{sec-2-prop-eq-1}, we deduce that  $\int_0^T\int_Z|f(s,\omega,z)|_EN(\mathrm{d} s,\mathrm{d} z)<\infty$, $\mathbb{P}$-a.s. Let us choose for the remainder of the proof an $\omega\in\Omega$ such that the previous integral is finite. Hence, $f(\cdot,\omega,\cdot)$ is Bochner integrable with respect to $N(\mathrm{d} s,\mathrm{d} z)(\omega)$. Moreover, we can find a sequence $\{f^n\}$ of functions on $[0,T]\times Z$ of the form $\sum_{i=1}^mx_i1_{B_i}$, $x_i\in E$ and $B_i\in\mathcal{B}([0,T])\otimes\mathcal{Z}$ such that $|f^n(t,\omega,z)-f(t,\omega,z)|_E$ decreases to $0$ as $n\rightarrow\infty$, for all $(t,z)\in[0,T]\times Z$. Hence it is enough to show \eqref{sec-2-eq-30} for functions of the form  $\sum_{i=1}^ma_i1_{B_i}$. For this, observe that
	\begin{align*}
		 \int_0^t\int_Z f(s,\omega,z) N(\mathrm{d} s,\mathrm{d} z)(\omega)&=\sum_{i=1}^mx_iN(B_i)(\omega) =\sum_{i=1}^m x_i\sum_{s\in[0,t]\cap\mathcal{D}(\pi(\omega))}1_{B_i}(s,\pi(s,\omega))\\
		&= \sum_{s\in[0,t]\cap\mathcal{D}(\pi(\omega))}\sum_{i=1}^mx_i1_{B_i}(s,\pi(s,\omega))
		= \sum_{s\in[0,t]\cap\mathcal{D}(\pi(\omega))}f(s,\omega,\pi(s,\omega)).
	\end{align*}

\end{proof}

\section{Stochastic convolution}\label{sec-sc}

  In this section, we continue to assume that $E$ is a separable Banach space of martingale type $p$, where $p\in(1,2]$.
Let $(S(t))_{t\geq0}$ be a contraction $C_0$-semigroup on $E$ with the infinitesimal generator $A$. Let us denote by $R(\lambda,A)=(\lambda I-A)^{-1}$, $\lambda>0$, the resolvent operator of $A$ and by $A_{\lambda}=\lambda A(\lambda I-A)^{-1}$ the Yosida approximation of $A$. It is well known that $A_{\lambda}$ is a bounded operator on $E$ and  $A_{\lambda}x\rightarrow x$, as $\lambda\rightarrow\infty$, for $x\in\mathcal{A}$ and $\lambda R(\lambda,A)x\rightarrow x$, as $\lambda\rightarrow\infty$, for all $x\in E$. Moreover, $\lambda R(\lambda,A)x\in\mathcal{D}(A)$, for all $x\in E$.

\indent Suppose that $\xi\in{\mathcal{M}^p([0,T]\times Z; \hat{\mathcal{P}};E)} $ and $\tilde{N}$ is a compensated Poisson random measure corresponding to the point process $\pi=(\pi(t))_{t\geq0}$. The aim of this paper is to study the path properties of the stochastic convolution process $u$ defined by
\begin{align}\label{sto_convolution}
   u(t)=\int_0^t\int_ZS(t-s)\xi(s,z)\,\tilde{N}(\mathrm{d} s,\mathrm{d} z),\ \ t\in [0,T].
\end{align}

Now let us consider Problem \eqref{SDE}, which for the convenience of the
reader we rewrite below.
\begin{align}\label{SDE2}
\begin{split}
    \mathrm{d} u(t)&=Au(t)\,\mathrm{d} t+\int_Z\xi(t,z)\,\tilde{N}(\mathrm{d} t,\mathrm{d} z),\ \ t \in [0,T],\\
    u(0)&=0.
    \end{split}
\end{align}
In this chapter we will study the above Question under a stronger assumption on the process $\xi$, i.e. that $\xi\in \mathcal{M}^p([0,T]\times Z; \hat{\mathcal{P}};\mathcal{D}(A))$.

\begin{defi}\label{defi: strong solution} Suppose that $\xi\in \mathcal{M}^p([0,T]\times Z; \hat{\mathcal{P}};\mathcal{D}(A))$.
A strong solution to Problem \eqref{SDE2} on the time interval $[0,T]$ is a
$\mathcal{D}(A)$-valued $\mathbb{F}$-adapted stochastic process $(u(t))_{t \in  [0,T]}$ with $E$-valued c\`{a}dl\`{a}g trajectories
 such that
\begin{enumerate}
   \item[(1)]$u(0)=0$ a.s.
   \item[(2)]For any $t \in [0,T]$ the following equality holds $\mathbb{P}$-a.s.
   \begin{align}\label{strong solution}
    u(t)=\int_0^tAu(s)\,\mathrm{d} s+\int_0^{t}\int_Z\xi(s,z)\,\tilde{N}(\mathrm{d} s,\mathrm{d} z).
   \end{align}
  \end{enumerate}
Similarly we can define a strong solution to Problem \eqref{SDE2} if $T=\infty$ and $\xi\in\mathcal{M}^p_{\loc}([0,\infty)\times Z; \hat{\mathcal{P}};\mathcal{D}(A))$.
\end{defi}

\begin{lemma}\label{lemma:strong solution} Assume that  $\xi\in\mathcal{M}^p([0,T]\times Z; \hat{\mathcal{P}};\mathcal{D}(A))$ for some $T>0$.
Then the process $u$ defined by
\begin{align}\label{sec-2-mild-eq}
   u(t)=\int_0^t\int_ZS(t-s)\xi(s,z)\,\tilde{N}(\mathrm{d} s,\mathrm{d} z),\ t \geq 0,
\end{align}
is  a unique strong solution of Equation \eqref{SDE}. In particular, $u$ has  $E$-valued c\`{a}dl\`{a}g trajectories.

\end{lemma}
\begin{proof} Let us fix $T>0$ and $t\in [0,T]$.
Define a function $F:[0,t]\times E\ni(s,x)\mapsto S(t-s)x\in E$.
It is straightforward to see that the function $F$ is separately continuous. Moreover, since $[0,t]$ is compact, we infer that $F$ is continuous.
   This, together with the $\mathbb{F}\otimes \mathcal{Z}$-predictability assumption on $\xi$, implies that
 the composition mapping
   $$[0,t]\times\Omega\times Z\ni(s,\omega,z)\mapsto (s,\xi(s,\omega,z))\mapsto F(s,\xi(s,\omega,z))\in E
$$
   is $\mathbb{F}\otimes \mathcal{Z}$-predictable.
   On the other hand,  since each operator $S(t)$, $t\geq0$ is a contraction on $E$ and $\xi$ is in $\mathcal{M}^p([0,T]\times Z; \hat{\mathcal{P}};E) ,$ we have
   \begin{align*}
        \mathbb{E} \int_0^{T}|1_{(0,t]}(s)S(t-s)\xi(s,z)|^p_E\,\nu(\mathrm{d} z)\,\mathrm{d} s\leq\mathbb{E} \int_0^T|\xi(s,z)|^p_E\,\nu(\mathrm{d} z)\,\mathrm{d} s<\infty.
   \end{align*}
   Therefore, the process $S(t-s)\xi(s,z)$ belongs to
   $\mathcal{M}^p([0,T]\times Z; \hat{\mathcal{P}};E) $.
   Hence, since the number $t$ is fixed, the process defined by
\begin{align*}
       \int_0^r\int_ZS(t-s)\xi(s,z)\,\tilde{N}(\mathrm{d} s,\mathrm{d} z),\ \ r\in[0,t],
\end{align*}
is a $\mathbb{F}$-martingale on $[0,t]$, see \cite{Zhu_2010_PhD thesis}. In particular, for each $r\in[0,t]$, the random variable
$\int_0^r\int_ZS(t-s)\xi(s,z)\tilde{N}(ds,dz)$ is
$\mathcal{F}_r$-measurable and hence
$u(t)$ is
$\mathcal{F}_t$-measurable.

 Now we proceed to show that $u(t)\in\mathcal{D}(A)$.
 Since $\xi\in\mathcal{M}^p([0,T]\times Z; \hat{\mathcal{P}};\mathcal{D}(A))$ and $R(\lambda,A)A=\lambda R(\lambda,A)-I_E$ on $\mathcal{D}(A)$,
we obtain
\begin{align*}
     R(\lambda,A)\int_0^t\int_ZAS(t-s)\xi(s,z)\,\tilde{N}(\mathrm{d} s,\mathrm{d} z)
              &=\lambda
              R(\lambda,A)\int_0^t\int_ZS(t-s)\xi(s,z)\,\tilde{N}(\mathrm{d} s,\mathrm{d} z)\\
              &\hspace{1cm}-\int_0^t\int_ZS(t-s)\xi(s,z)\,\tilde{N}(\mathrm{d} s,\mathrm{d} z).
\end{align*}
Hence, it follows that
\begin{align*}
   \int_0^t\int_ZS(t-s)&\xi(s,z)\,\tilde{N}(\mathrm{d} s,\mathrm{d} z)\\
   &=R(\lambda,A)\left[\lambda\int_0^t\int_ZS(t-s)\xi(s,z)\,\tilde{N}(\mathrm{d} s,\mathrm{d} z)-\int_0^t\int_ZAS(t-s)\xi(s,z)\tilde{N}(\mathrm{d} s,\mathrm{d} z)\right].
\end{align*}
Since the range of $R(\lambda,A)$ is equal to $\mathcal{D}(A)$, we infer that
$u(t)\in \mathcal{D}(A)$.

Next we shall show that
\begin{align}\label{eq-14}
     A\int_0^t\int_ZS(t-s)\xi(s,z)\,\tilde{N}(\mathrm{d} s,\mathrm{d} z)=\int_0^t\int_ZAS(t-s)\xi(s,z)\,\tilde{N}(\mathrm{d} s,\mathrm{d} z),\ \ \mathbb{P}\text{-a.s.}
\end{align}
For this, let us take $h \in (0,t)$ and observe that since the operator $\frac{S(h)-I}{h}$ is bounded, we get the following equality
 \begin{align*}
\frac{S(h)-I}{h}\int_0^t\int_ZS(t-s)\xi(s,z)\,\tilde{N}(\mathrm{d} s,\mathrm{d} z)
=\int_0^t\int_Z\frac{S(h)-I}{h}S(t-s)\xi(s,z)\,\tilde{N}(\mathrm{d} s,\mathrm{d} z).
 \end{align*}
So by applying the triangle inequality and
\eqref{Ito-isometry}, we find
 \begin{eqnarray}\label{eq-13}
   &&\hspace{-1.5cm}\lefteqn{\mathbb{E} \left|A\int_0^t\int_ZS(t-s)\xi(s,z)\,\tilde{N}(\mathrm{d} s,\mathrm{d} z)-\int_0^t\int_ZAS(t-s)\xi(s,z)\,\tilde{N}(\mathrm{d} s,\mathrm{d} z)\right|^p_E\nonumber}\\
   &\leq& 2^p\,
   \mathbb{E} \left|A\int_0^t\int_ZS(t-s)\xi(s,z)\,\tilde{N}(\mathrm{d} s,\mathrm{d} z)-\frac{S(h)-I}{h}\int_0^t\int_ZS(t-s)\xi(s,z)\,\tilde{N}(\mathrm{d} s,\mathrm{d} z)\right|^p_E\nonumber\\
   &+& 2^p\,\mathbb{E} \left|\int_0^t\int_ZAS(t-s)\xi(s,z)\,\tilde{N}(\mathrm{d} s,\mathrm{d} z)-\int_0^t\int_Z\frac{S(h)-I}{h}S(t-s)\xi(s,z)\,\tilde{N}(\mathrm{d} s,\mathrm{d} z)\right|^p_E\nonumber\\
   &\leq&  2^p\,
   \mathbb{E} \left|\left(A-\frac{S(h)-I}{h}\right)u(t)\right|^p_E\nonumber\\
   &+&C_p\,\mathbb{E} \int_0^t\int_Z\left|AS(t-s)\xi(s,z)-\frac{1}{h}\Big{(}S(h)-I\Big{)}S(t-s)\xi(s,z)\right|^p_E\nu(\mathrm{d} z)\,\mathrm{d} s\nonumber\\
   &=:&\textrm{I}(h)+\textrm{II}(h).
 \end{eqnarray}
 First we will deal with $\textrm{II}(h)$. Since $\xi(s,z)\in \mathcal{D}(A)$, we observe that $$\frac{S(h)-I}{h}S(t-s)\xi(s,z)=\frac{1}{h}\int_0^h S(r)
 AS(t-s)\xi(s,z)\,\mathrm{d} r.$$ So by using the uniform condition of the operators $S(h)$ and $S(t-s)$, we deduce that  $$\Big|\frac{S(h)-I}{h}S(t-s)\xi(s,z)\Big|_E\leq
 C|A\xi(s,z)|_E.$$
Hence we infer that the integrand
 of $\textrm{II}(h)$ is bounded by a function $C_1|A\xi(s,z)|_E$
which belongs to $\mathcal{M}^p([0,T]\times
Z;\mathbb{R})$ by
assumption. Clearly, the integrand
$$\Big|AS(t-s)\xi(s,z)-\frac{1}{h}\Big{(}S(h)-I\Big{)}S(t-s)\xi(s,z)\Big|^p_E$$
converges to $0$ pointwise on $[0,t]\times\Omega\times Z$.
Therefore, by the Lebesgue
Dominated Convergence Theorem (LDCT for short), $\textrm{II}(h)$ converges to $0$ as $h\searrow0$.\\
Now we turn our attention to the term $\textrm{I}(h)$. Since $\mathbb{E} \|u(t)\|^p_{\mathcal{D}(A)}<\infty$ and $|\frac{1}{h}(S(h)-I)x|_E\leq |Ax|_E$, for all $x\in\mathcal{D}(A)$, we infer by applying the LDCT that  $\textrm{I}(h)$
converges to $0$ as $h\mathrm{d}ownarrow0$ as well. Hence \eqref{eq-14} holds.

 In order to finish the proof, we need to verify \eqref{strong solution}.
By \eqref{eq-14}  and the
Fubini Theorem we find
\begin{align*}
      \int_0^tAu(s)\,\mathrm{d} s&=\int_0^t\int_0^s\int_ZAS(s-r)\xi(r,z)\tilde{N}(\mathrm{d} r,\mathrm{d} z)\,\mathrm{d} s\\
      &=\int_0^t\int_Z\int_r^tAS(s-r)\xi(r,z)\,\mathrm{d} s\,\tilde{N}(\mathrm{d} r,\mathrm{d} z)\\
      &=\int_0^t\int_Z\left(S(t-r)\xi(r,z)-\xi(r,z)\right)\,\tilde{N}(\mathrm{d} r,\mathrm{d} z)\\
      &=u(t)-\int_0^t\int_Z\xi(r,z)\,\tilde{N}(\mathrm{d} r,\mathrm{d} z), \ \
      \mathbb{P}\text{-a.s.}
\end{align*}
Hence we have
\begin{align*}
    u(t)=\int_0^tAu(s)\,\mathrm{d} s+\int_0^{t}\int_Z\xi(s,z)\tilde{N}(\mathrm{d} s,\mathrm{d} z)
\end{align*}
from which we can also see that $u$ is a c\`{a}dl\`{a}g process.

\indent For the uniqueness, suppose that $u^1$ and $u^2$ are two
strong solutions of Problem \eqref{SDE}. Let $w=u^1-u^2$. Then we
infer
\begin{align*}
 w(t)=u^1(t)-u^2(t)=\int_0^tA(u^1(s)-u^2(s))\,\mathrm{d} s=A\int_0^tw(s)\,\mathrm{d} s.
\end{align*}
Put $v(t)=\int_0^tw(s)\,ds$. Then $v(t)$ is continuously
differentiable on $[0,T]$ and $v(t)\in\mathcal{D}(A)$. Now applying the
It\^{o} formula to the function $f(s)=S(t-s)v(s)$ yields
\begin{align*}
     \frac{\mathrm{d} f(s)}{\mathrm{d} s}&=-AS(t-s)v(s)+S(t-s)\frac{\mathrm{d} v(s)}{\mathrm{d} s}\\
                     &=-AS(t-s)v(s)+S(t-s)w(s)=-AS(t-s)v(s)+S(t-s)Av(s)=0.
\end{align*}
So we infer $v(t)=f(t)=f(0)=S(t)v(0)=0$ a.s.. Therefore, $w(t)=0$
a.s.. That is $u^1(t)=u^2(t)$ a.s. $t \geq 0$.
\end{proof}


\section{Maximal inequalities for stochastic convolution}\label{sec-main}

From now on we make the following assumptions on the Banach space $E$.
\begin{assumption}\label{assu-01}
Suppose that $\bigl(E,\vert \cdot \vert \bigr)$ is a real separable Banach space and $p \in (1,2]$. In addition we assume that the Banach space $E$
satisfies the following condition:\\
There exists a norm $|\cdot|_E$ on
$E$ which is equivalent to $\vert \cdot \vert$, and numbers $q\in [p,\infty)  $  such that
 the function $\phi:E\ni
x\mapsto |x|^q_E\in\mathbb{R}$, is of class $C^2$ and there exist
constants $k_1,k_2$ such that for every $x\in E$, the first the second Fr{`e}chet derivatives of $\phi$ satisfy,  respectively, $|\phi^\prime(x)|\leq
k_1|x|^{q-1}_E$ and $|\phi^{\prime\prime}(x)|\leq k_2|x|_E^{q-2}$, for all $x\in E$.
\end{assumption}
\begin{assumption}\label{assu-02}
 The $C_0$ semigroup $S(t)$, $t\geq 0$ on $E$ is  of contraction type with respect to the norm $|\cdot|_E$ from Assumption \ref{assu-01}.
\end{assumption}

Now we proceed with the study of the stochastic convolution
\begin{align}\label{sc-01}
   u(t)=\int_0^t\int_ZS(t-s)\xi(s,z)\tilde{N}(\mathrm{d} s,\mathrm{d} z),\; t\in [0,T],
\end{align}
provided $T>0$ and the process $\xi$ belongs to the space $ \mathcal{M}^p([0,T]\times Z; \hat{\mathcal{P}};E)$.

Let us now formulate our main result a proof of which will be presented at the end of this section and preceded by two Lemmata: \ref{Lemma 1 for theo} and \ref{lem:separability}.


\begin{theorem}\label{theo_1}
Suppose that $\bigl(E,\vert \cdot \vert \bigr)$ is a real separable Banach space satisfying
Assumption \ref{assu-01}, with numbers  $p\in (1,2]$ and  $q \in [p,\infty)$ and   $S(t)$, $t\geq 0$ is a $C_0$ semigroup on $E$
satisfying
Assumption \ref{assu-02}. Assume that $\xi\in \mathcal{M}^p_{\loc}([0,\infty)\times Z; \hat{\mathcal{P}};E)$ and in fact that  
\[
\mathbb{E} \left(\int_0^{T}\int_Z|\xi(s,z)|^{p}\,N(\mathrm{d} s,\mathrm{d} z)\right)^{\frac{q}{p}},\;\; T>0.
\] 
 Then there exists a separable and c\`{a}dl\`{a}g
modification $\tilde u$ of the process $u$ defined by formula \eqref{sc-01}. Moreover, for every    $q^\prime\geq q$,  there exists a  constant $C$ independent of the process $\xi$, such that for
every  stopping time $\tau>0$ and every $t>0$,
\begin{align}\label{inequality}
      \mathbb{E} \sup_{0 \leq s \leq t \wedge \tau }|\tilde{u}(s)|^{q^\prime}\leq C\ \mathbb{E} \left(\int_0^{t\wedge\tau}\int_Z|\xi(s,z)|^{p}\,N(\mathrm{d} s,\mathrm{d} z)\right)^{\frac{q^\prime}{p}}.
\end{align}
\end{theorem}

Before proceeding with proofs, let us point our an  important ingredient of the above result, i.e. the c\`adl\`g property of the stochastic convolution process $u$. This topic has attracted recently an attention  even in the Hilbert space setup because of the counterexample presented in \cite{[Brz+5]} and a positive result given in \cite{[Liu+Zhai_2012]}.

\begin{remark}\label{rem-4.6}
It is worth pointing out that since by  \cite[Proposition 3.6]{[Da prato]}, every adapted and stochastically continuous process on an interval $[0,T]$ has a predictable version on $[0,T]$, we conclude that the process
$u(t)$, $t \geq 0$ has a predictable version.
Henceforth, when we study the stochastic convolution process, we
refer to the version of it  that is c\`{a}dl\`{a}g and its
supremum over every compact interval  $[0,T]$ is $\mathcal{F}_T$-measurable.
\end{remark}

\begin{remark}\label{rem-cond 1} It can be proved, see Appendix \ref{sec-appendix}, that if the real separable Banach space $E$ satisfies  Assumption \eqref{assu-01}, then $E$ is of martingale type $p$, for all $p\in(1,2]$. Hence the stochastic convolution process \eqref{sc-01} is well defined for $\xi\in\mathcal{M}^p([0,T]\times Z; \hat{\mathcal{P}};E) $ and in particular, if $\xi\in\mathcal{M}^p([0,T]\times Z; \hat{\mathcal{P}};\mathcal{D}(A))$, then \eqref{sc-01} is a unique strong solution of equation \eqref{SDE} by Lemma \ref{lemma:strong solution}.
\end{remark}

\begin{remark}\label{rem-Sobolev spaces}
Note  that if $\mathcal{O}$ is a domain in $\mathbb{R}^n$ with Lipschitz boundary $\partial{\mathcal{O}}$, then the   Sobolev spaces $H^{s,r}(\mathcal{O})$ with
$r\in[2,\infty)$ and $s\in\mathbb{R}_+$ satisfy Assumption \eqref{assu-01} and  the Lebesgue  $L^{r}(\mathcal{O})$-spaces with $r\geq 2$ also satisfies Assumption \eqref{assu-01}.
\end{remark}

Before proving the main theorem, we first need the following Lemmas.
\begin{lemma}\label{Lemma 1 for theo} If Assumptions \ref{assu-01} and \ref{assu-02} are satisfied, then  for all $x\in D(A)$, \[\phi^\prime(x)(Ax)\leq 0.\]
\end{lemma}

\begin{proof}
This follows immediately from the fact that the function $t\mapsto\phi(S(t)x)$ is decreasing and
\begin{align*}
       \left.\frac{d\phi(S(t)x)}{dt}\right|_{t=0}=\phi^\prime(S(0)x)(Ax)=\phi^\prime(x)(Ax).
\end{align*}
\end{proof}

In the remainder of this section we will always assume that Assumptions \ref{assu-01} and \ref{assu-02} are satisfied.

\begin{lemma}\label{lem:separability} Suppose that  $\xi\in \mathcal{M}^p([0,T]\times Z; \hat{\mathcal{P}};E)$ for some $T>0$. Then there exists a version $\bar{u}$ of the process $u$ defined by equality \eqref{sc-01} such that the function $\sup_{t \in [0,T]}|\bar{u}(t)|$ is $\mathcal{F}_T$-measurable.
\end{lemma}

\begin{proof}
	According to Remark \ref{rem-cond 1}, we can take $p\in(1,2]$. Let us fix $T>0$ and that $\xi\in \mathcal{M}^p([0,T]\times Z; \hat{\mathcal{P}};E)$.
We begin with showing that the process $u$ is continuous in the $p$-mean.
	By applying the inequality $|a+b|^p\leq 2^p|a|^p+2^p|b|^p$,
	 inequality \eqref{Ito-isometry} and the contraction property of the
	semigroup $S(t)$, $t\geq0$, we have,  for $0\leq r<t\leq T$,
	 \begin{align*}
	        \mathbb{E} |u(t)-u(r)|_E^p&=\mathbb{E} \left|\int_0^t\int_ZS(t-s)\xi(s,z)\tilde{N}(\mathrm{d} s,\mathrm{d} z)-\int_0^r\int_ZS(r-s)\xi(s,z)\tilde{N}(\mathrm{d} s,\mathrm{d} z)\right|^p_E\\
	        &\leq 2^p\,\mathbb{E} \left|\int_r^t\int_ZS(t-s)\xi(s,z)\tilde{N}(\mathrm{d} s,\mathrm{d} z)\right|^p_E\\
	        &\hspace{2cm}+2^p\,\mathbb{E} \left|\int_0^r\int_Z\Big{(}S(t-s)-S(r-s)\Big{)}\xi(s,z)\tilde{N}(\mathrm{d} s,\mathrm{d} z)\right|^p_E
	 \end{align*}
	 \begin{align}\nonumber
	        &\leq
	        2^p\,C_p\,\mathbb{E} \int_r^t\int_Z|S(t-s)\xi(s,z)|^p_E\,\nu(\mathrm{d} z)\,\mathrm{d} s\\
\nonumber
	        &\hspace{2cm}+2^pC_p\,\mathbb{E} \int_0^r\int_Z|\big{(}S(t-s)-S(r-s)\big{)}\xi(s,z)|^p_E\,\nu(\mathrm{d} z)\,\mathrm{d} s\\
\nonumber	                    &\leq2^pC_p\,\mathbb{E} \int_0^T\int_Z1_{(r,t]}(s)|\xi(s,z)|^p_E\,\nu(\mathrm{d} z)\,\mathrm{d} s\\
\label{ineq-4.2}	        &\hspace{2cm}+2^pC_p\,\mathbb{E} \int_0^T\int_Z|1_{(0,r]}\big{(}S(t-s)-S(r-s)\big{)}\xi(s,z)|^p_E\,\nu(\mathrm{d} z)\,\mathrm{d} s.
	 \end{align}
 Let us  observe that $1_{(r,t]}(s)|\xi(s,z)|^p_E$ converges to $1_{\{t\}}(s)|\xi(s,z)|^p_E$ or $1_{\{r\}}(s) |\xi(s,z)|^p_E$ for all $(s,\omega,z)\in[0,T]\times\Omega\times Z$, as $t\searrow r$ or $r\nearrow t$.  Note also that the limit function is equal to $0$ for $\Leb \otimes \mathbb{P}\otimes \nu$ almost surely. Moreover  $1_{(r,t]}(s)|\xi(s,z)|^p_E \leq |\xi(s,z)|^p_E$ for all $(s,\omega,z)\in[0,T]\times\Omega\times Z$ and all $(r,t)\in [0,T]^2$: $r<t$.  Finally, let us note that
\[
\mathbb{E} \int_0^T\int_Z |\xi(s,z)|^p_E \,\nu(\mathrm{d} z)\,\mathrm{d} s <\infty
\]
because by assumptions, $\xi\in \mathcal{M}^p([0,T]\times Z; \hat{\mathcal{P}};E)$.\\
Hence,  by the Lebesgue DCT,
	 the first term on the right hand side of the above inequality \eqref{ineq-4.2}	 converges to $0$ as $t\searrow r$ or $r\nearrow t$. \\
For the second term, by the continuity of $C_0$-semigroup $S(t)$, $t\geq 0$, the integrand $1_{(0,r]}\big{(}S(t-s)-S(r-s)\big{)}\xi(s,z)$ converges to $0$ pointwise on $[0,T]\times\Omega\times Z$, as $t\searrow r$ or $r\nearrow t$. Moreover, by the uniform boundedness of the operators $S(t)$, $t\in [0,T]$ we infer that there exists $M>0$ such that
\begin{align*}
	     |1_{(0,r]}\big{(}S(t-s)-S(r-s)\big{)}\xi(s,\omega,z)|_E\leq M|\xi(s,\omega,z)|_E
	 \end{align*}
for all $(s,\omega,z)\in[0,T]\times\Omega\times Z$ and all $(r,t)\in [0,T]^2$ with $r<t$.

	 So,  again by invoking the  Lebesgue DCT, we deduce that the second term converges to $0$ as $t\searrow r$ or $r\nearrow t$. Therefore, we conclude the process $u$  is stochastically continuous. Hence, as the space $E$ is separable, according to Theorem $5.3$
in \cite{Wentzell}, we can find a version $\bar{u}$ of $u$ which
is separable. That is there exists a countable subset $T_0$ which is dense in $[0,T]$ such that $\bar{u}(t)$ belongs to the
set of partial limits $\{\lim_{s\in T_0,s\rightarrow t}\bar{u}(s)\}$,
for all $t\in [0,T]\backslash T_0$. Hence
\begin{align*}
    \sup_{t \geq 0}|\bar{u}(t)|=\sup_{t\in [0,T]}\lim_{s_n\to t, s_n\in T_0}|\bar{u}(s_n)|=\sup_{s_n\in T_0}|\bar{u}(s_n)|.
\end{align*}
 Since $\sup_{s_n\in T_0}|\bar{u}(s_n)|$ is $\mathcal{F}_T$-measurable, we deduce that  the function $\sup_{t \geq 0}|\bar{u}(t)|$ is also
$\mathcal{F}_T$-measurable.
\end{proof}

We now ready to embark on the proof of the main result.

\begin{proof}[Proof of Theorem \ref{theo_1}] 
Let us first notice that because the original norm $\vert \cdot\vert$ is equivalent to the norm $|\cdot|_E$, it is sufficient to prove inequality \eqref{inequality} with the latter norm, i.e.
\begin{align}\label{inequality'}
      \mathbb{E} \sup_{0 \leq s \leq t \wedge \tau }|\tilde{u}(s)|_E^{q^\prime}\leq C\ \mathbb{E} \left(\int_0^{t\wedge\tau}\int_Z|\xi(s,z)|_E^{p}\,N(\mathrm{d} s,\mathrm{d} z)\right)^{\frac{q^\prime}{p}}.
\end{align}
Secondly, let us note that it is sufficient to consider processes defined on a bounded time intervals. Hence we   fix $T>0$  and $\xi\in \mathcal{M}^p([0,T]\times Z; \hat{\mathcal{P}};E)$.
In view of Remark \ref{rem-cond 1}, let us also fix $p\in(1,2]$ and let us fix $q\in [p,\infty)$ as in Assumption \ref{assu-01}. \\
\textbf{Case I.} We first prove \eqref{inequality} for $\xi\in
\mathcal{M}^p([0,T]\times Z; \hat{\mathcal{P}};\mathcal{D}(A))$.
We have shown in Lemma \ref{lemma:strong solution} that the process
$u$ is a unique strong solution to Problem \eqref{SDE2} satisfying
\begin{align}\label{strong solution_1}
   u(t)=\int_0^tAu(s)\,\mathrm{d} s+\int_0^t\int_Z\xi(s,z)\tilde{N}(\mathrm{d} s,\mathrm{d} z),\; t\in [0,T].
\end{align}
Since the function $\phi:E\ni x\mapsto|x|^q_E$ is of $C^2$ class by assumption, one may apply the It\^{o} formula from \cite{[Hausenblas]}, see also  \cite[Theorem 3.5.3 ]{Zhu_2010_PhD thesis},   to the process $u$ given by \eqref{strong solution_1} and get, for $t \geq 0$,
\begin{align}\label{EQ-1}
   \phi(u(t))
   &=\int_0^t\phi^\prime(u(s))(Au(s))\,\mathrm{d} s+\int_0^t\int_Z\phi^\prime(u(s-))(\xi(s,z))\,\tilde{N}(\mathrm{d} s,\mathrm{d} z)\nonumber\\
  &\hspace{1cm}+\int_0^t\int_Z\Big{[}\phi(u(s-)+\xi(s,z))-\phi(u(s-))-\phi^\prime(u(s-))(\xi(s,z))\Big{]}\,N(\mathrm{d} s,\mathrm{d} z)\ \ \mathbb{P}\text{-a.s..}
\end{align}
Let $\tau\geq0$ be a stopping time.
Since by Lemma \ref{Lemma 1 for theo}, $\phi^\prime(x)(Ax)\leq 0$, for all $x\in D(A)$, we infer
that for $t \geq 0$,
\begin{align}\label{eq_5}
         \phi(u(t\wedge\tau))
   &\leq      \int_0^t\int_Z1_{(0,\tau]}(s)\phi^\prime(u(s-))(\xi(s,z))\,\tilde{N}(\mathrm{d} s,\mathrm{d} z)\nonumber\\
  &\hspace{1cm}+\int_0^{t\wedge\tau}\int_Z\Big{[}\phi(u(s-)+\xi(s,z))-\phi(u(s-))-\phi^\prime(u(s-))(\xi(s,z))\Big{]}\,N(\mathrm{d} s,\mathrm{d} z)\\
&=:I_1(t)+I_2(t)\ \ \mathbb{P}\text{-a.s..}\nonumber
\end{align}
Note that $I_1(t)$ is an $\mathbb{R}$-valued local martingale. Applying the real-valued version of Burkholder-Davis-Gundy inequality, see \cite[Proposition 15.7]{[Kallenberg]},   to the process $I_1$ we
deduce for some constant $C$ that
\begin{align}\label{eq-I_1}
    \mathbb{E} \sup_{0\leq t\leq
    T}|I_1(t)|_E&\leq C\,\mathbb{E} \left(\int_0^T\int_Z1_{(0,\tau]}(s)|\phi^\prime(u(s-))(\xi(s,z))|_E^2\,N(\mathrm{d} s,\mathrm{d} z)\right)^{\frac{1}{2}}\nonumber\\
&=C\,\mathbb{E} \Big(\sum_{t\leq T}1_{(0,\tau]}(s)|\phi^\prime(u(s-))(\xi(s,\pi(s)))|^2_E\Big)^{\frac{1}{2}} \nonumber\\
&\leq C\,\mathbb{E} \Big(\sum_{t\leq T}1_{(0,\tau]}(s)|\phi^\prime(u(s-))(\xi(s,\pi(s)))|_E^p\Big)^{\frac{1}{p}}\\
&=C\,\mathbb{E} \left(\int_0^T\int_Z1_{(0,\tau]}(s)|\phi^\prime(u(s-))(\xi(s,z))|_E^p\,N(\mathrm{d} s,\mathrm{d} z)\right)^{\frac{1}{p}} \nonumber\\
    &\leq k_1C\,\mathbb{E} \sup_{0\leq t\leq
    T\wedge\tau}|u(t)|_E^{q-1}\left(\int_0^{T\wedge\tau}\int_Z|\xi(s,z)|^p_E\,N(\mathrm{d} s,\mathrm{d} z)\right)^{\frac{1}{p}}.\nonumber
\end{align}
To estimate the integral $I_2(t)$, we observe first that
for every $t \geq 0$,
\begin{align*}
   |I_2(t)|_E&\leq\int_0^{t\wedge\tau}\int_Z\Big{|}\phi(u(s-)+\xi(s,z))-\phi(u(s-))-\phi^\prime(u(s-))(\xi(s,z))\Big{|}_E\,N(\mathrm{d} s,\mathrm{d} z)\\
       &=\sum_{s\in(0,t\wedge\tau]\cap\mathcal{D}(\pi)}\Big{|}\phi(u(s-)+\xi(s,\pi(s)))-\phi(u(s-))-\phi^\prime(u(s-))(\xi(s,\pi(s)))\Big{|}_E,\ \ \mathbb{P}\text{-a.s..}
\end{align*}
 Since assumptions the function $\phi$ is of $C^2$ class,
 by  the mean value Theorem, see \cite{[Cartan]},
  to the function $\phi$, for each $s\in [0,t\wedge\tau]$ we have
\begin{align*}
    \Big{|}\phi(u(s-)+\xi(s,\pi(s)))-\phi(u(s-))\Big{|}_E&\leq  \int_0^!\, |\xi(s,\pi(s))|_E \Big{\vert}\phi^\prime(u(s-)+\theta\xi(s,\pi(s)))\Big{\vert}_{\mathcal{L}(E)} \, d\theta.
\end{align*}
Since  $|x+\theta y|_E\leq \max\{|x|_E,|x+y|_E\}$ for all $x,y\in
E$ and $\theta \in [0,1]$, and, by Assumption \ref{assu-01},  $|\phi^\prime(x)|\leq k_1|x|_E^{q-1}$, $x\in E$, we infer that
\begin{align*}
   \Big{|}\phi^\prime(u(s-)+\theta\xi(s,\pi(s)))\Big{|}_{\mathcal{L}(E)}&\leq k_1\big{|}u(s-)+\theta\xi(s,\pi(s))\big{|}_E^{q-1}\\
   &\leq k_1\max\big{\{}|u(s-)|_E^{q-1},\big{|}u(s-)+\xi(s,\pi(s))\big{|}_E^{q-1}\big{\}}.
\end{align*}
Observe that for all $0\leq s\leq t\wedge\tau$,
\begin{align*}
|u(s-)|^{q-1}_E\leq
\sup_{0\leq r\leq t\wedge\tau}|u(r-)|^{q-1}_E\leq\sup_{0\leq t\leq
T\wedge\tau}|u(t)|^{q-1}_E.
\end{align*}
Moreover, since $u(s-)+\xi(s,\pi(s))=u(s)$, for $s\in(0,t\wedge\tau]\cap\mathcal{D}(\pi)$,  we infer that
\begin{align*}
|u(s-)+\xi(s,\pi(s))|^{q-1}_E\leq\sup_{0\leq r\leq
t\wedge\tau}|u(r)|^{q-1}_E\leq \sup_{0 \leq t \leq T \wedge \tau }|u(t)|^{q-1}_E.
\end{align*}
Therefore, we infer that for each $s\in [0,t\wedge\tau]$,
\begin{align*}
      \Big{|}\phi(u(s-)+\xi(s,\pi(s)))-\phi(u(s-))\Big{|}_E
      &\leq k_1|\xi(s,\pi(s))|_E\sup_{0 \leq t \leq T \wedge \tau }|u(t)|^{q-1}_E.
\end{align*}
It follows that
\begin{align*}
    \Big{|}\phi(u(s-)+\xi(s,\pi(s)))&-\phi(u(s-))-\phi^\prime(u(s-))(\xi(s,\pi(s)))\Big{|}_E\\
    &\leq\Big{|}\phi(u(s-)+\xi(s,\pi(s)))-\phi(u(s-))\Big{|}_E
    +\Big{|}\phi^\prime(u(s-))(\xi(s,\pi(s)))\Big{|}_E\\
    &\leq 2k_1|\xi(s,\pi(s))|_E\sup_{0 \leq t \leq T \wedge \tau }|u(t)|^{q-1}_E.
\end{align*}
On the other hand, we can also find some $0<\delta<1$ such that
\begin{eqnarray*}
  &&\hspace{-1truecm}\lefteqn{   \Big{|}\phi(u(s-)+\xi(s,\pi(s)))-\phi(u(s-))-\phi^\prime(u(s-))(\xi(s,\pi(s)))\Big{|}_E}
  \\
     &\leq &\frac{1}{2}|\xi(s,\pi(s))|^2_E|\phi^{\prime\prime}(u(s-)+\mathrm{d}elta\xi(s,\pi(s)))|
     \leq\frac{k_2}{2}|\xi(s,\pi(s))|^2_E|u(s-)+\mathrm{d}elta\xi(s,\pi(s))|_E^{q-2}.
\end{eqnarray*}
Hence, a similar argument as above, we infer that for $s\in[0,t\wedge\tau]$
\begin{align*}
\Big{|}\phi(u(s-)+\xi(s,\pi(s)))-\phi(u(s-))-\phi^\prime(u(s-))(\xi(s,\pi(s)))\Big{|}_E
\leq\frac{k_2}{2}|\xi(s,\pi(s))|^2_E\sup_{0\leq t\leq
T\wedge\tau}|u(t)|^{q-2}_E.
\end{align*}
Thus, with  $K=(2k_1)^{2-p}\big{(}\frac{k_2}{2}\big)^{p-1}$, we have
\begin{eqnarray*}
  &&\hspace{-1truecm}
  \lefteqn{
      \Big{|}\phi(u(s-)+\xi(s,\pi(s)))-\phi(u(s-))-\phi^\prime(u(s-))(\xi(s,\pi(s)))\Big{|}_E}\\
      &=&\Big{|}\phi(u(s-)+\xi(s,\pi(s)))-\phi(u(s-))-\phi^\prime(u(s-))(\xi(s,\pi(s)))\Big{|}_E^{(2-p)+(p-1)}\\
      &\leq& \left(2k_1|\xi(s,\pi(s))|_E\sup_{0\leq t\leq
      T\wedge\tau}|u(t)|^{q-1}_E\right)^{2-p}\left(\frac{k_2}{2}|\xi(s,\pi(s))|^2_E\sup_{0\leq t\leq
T\wedge\tau}|u(t)|^{q-2}_E\right)^{p-1}\\
&\leq& K|\xi(s,\pi(s))|^p_E\sup_{0 \leq t \leq T \wedge \tau }|u(t)|_E^{q-p}.
\end{eqnarray*}
 Hence, by Proposition \ref{prop-sec-2}, we get
\begin{align*}
    &\sum_{s\in(0,t\wedge\tau]\cap\mathcal{D}(\pi)}\Big{|}\phi(u(s-)+\xi(s,\pi(s)))-\phi(u(s-))-\phi^\prime(u(s-))(\xi(s,\pi(s)))\Big{|}_E\\
&\leq K \sup_{0\leq t\leq
T\wedge\tau}|u(t)|^{q-p}_E\sum_{s\in(0,t\wedge\tau]\cap\mathcal{D}(\pi)}|\xi(s,\pi(s))|^p_E
\\
    &=K\sup_{0 \leq t \leq T \wedge \tau }|u(t)|^{q-p}_E\int_0^{T\wedge\tau}\int_Z|\xi(r,z)|^p_E\
    N(\mathrm{d} r,\mathrm{d} z).
\end{align*}
Therefore, we infer that  there exists a  constant $K$  depending only  on $k_1$, $k_2$, $p$ and $q$ such that
\begin{align*}
  \mathbb{E} \sup_{t \geq 0}|I_2(t)|_E &\leq \int_0^{T\wedge\tau}\int_Z\Big{|}\phi(u(r-)+\xi(r,z))-\phi(u(r-))-\phi^\prime(u(r-))(\xi(r,z))\Big{|}_EN(\mathrm{d} r,\mathrm{d} z)\\
    &\leq K\sup_{0 \leq t \leq T \wedge \tau }|u(t)|^{q-p}_E\int_0^{T\wedge\tau}\int_Z|\xi(s,z)|^p_E\
N(\mathrm{d} s,\mathrm{d} z).
\end{align*}
Next, by  applying H\"{o}lder's and Young's inequalities to the process $I_1$, from inequality \eqref{eq-I_1} we infer that
yields
\begin{eqnarray*}
    \mathbb{E} \sup_{t \geq 0}|I_1(t)|_E&\leq& k_1C\,\left(\mathbb{E} \Big{[}\sup_{0 \leq t \leq T \wedge \tau }|u(t)|^{q-1}_E\Big{]}^{\frac{q}{q-1}}\right)^{\frac{q-1}{q}}
    \left(\mathbb{E} \left(\int_0^{T\wedge\tau}\int_Z|\xi(s,z)|_E^p\;N(\mathrm{d} s,\mathrm{d} z)\right)^{\frac{q}{p}}\right)^{\frac{1}{q}}\\
   &\leq& k_1C\,\left(\mathbb{E} \sup_{0 \leq t \leq T \wedge \tau }|u(t)|^{q}_E\right)^{\frac{q-1}{q}}
    \left(\mathbb{E} \left(\int_0^{T\wedge\tau}\int_Z|\xi(s,z)|_E^p\;N(\mathrm{d} s,\mathrm{d} z)\right)^{\frac{q}{p}}\right)^{\frac{1}{q}}\\
    &=&k_1C\,\left(\mathbb{E} \sup_{0 \leq t \leq T \wedge \tau }|u(t)|^{q}_E\ \varepsilon\right)^{\frac{q-1}{q}}
    \left(\mathbb{E} \left(\int_0^{T\wedge\tau}\int_Z|\xi(s,z)|_E^p\;N(\mathrm{d} s,\mathrm{d} z)\right)^{\frac{q}{p}}\Big{(}\frac{1}{\varepsilon}
    \Big{)}^{q-1}\right)^{\frac{1}{q}}\\
    &\leq&k_1C\,\frac{q-1}{q}\ \varepsilon\ \mathbb{E} \sup_{0 \leq t \leq T \wedge \tau }|u(t)|_E^q+k_1C\frac{1}{\varepsilon^{q-1}q}
    \mathbb{E} \left(\int_0^{T\wedge\tau}\int_Z|\xi(s,z)|^{p}_E\;N(\mathrm{d} s,\mathrm{d} z)\right)^{\frac{q}{p}}.
\end{eqnarray*}
 In the same manner for the integral $I_2(t)$ we can see that
\begin{eqnarray*}
  \mathbb{E} \sup_{t \geq 0} |I_2(t)|_E
   &\leq&K\mathbb{E} \sup_{0 \leq t \leq T \wedge \tau }|u(t)|^{q-p}_E\int_0^{T\wedge\tau}\int_Z|\xi(s,z)|^p_E\;N(\mathrm{d} s,\mathrm{d} z)\\
   &\leq&K\left(\mathbb{E} \sup_{0 \leq t \leq T \wedge \tau }|u(t)|_E^q\right)^{\frac{q-p}{q}}   \left(\mathbb{E} \left(\int_0^{T\wedge\tau}\int_Z|\xi(s,z)|^{p}_E\;N(\mathrm{d} s,\mathrm{d} z)\right)^{\frac{q}{p}}\right)^{\frac{p}{q}}\\
  &\leq&K\frac{q-p}{q}\varepsilon\,\mathbb{E} \sup_{0 \leq t \leq T \wedge \tau }|u(t)|^q_E+K\frac{p}{q}\frac{1}{\varepsilon^{\frac{q-p}{q}}}\,\mathbb{E} \left(\int_0^{T\wedge\tau}\int_Z|\xi(s,z)|^p_E\;N(\mathrm{d} s,\mathrm{d} z)\right)^{q}
   \end{eqnarray*}
Thus it  follows that
\begin{eqnarray*}
    \mathbb{E} \sup_{0 \leq t \leq T \wedge \tau }|u(t)|^q_E&\leq
&\left(k_1C\frac{q-1}{q}+K \frac{q-p}{q}\right)\varepsilon\,\mathbb{E} \sup_{0 \leq t \leq T \wedge \tau }|u(t)|_E^q\\
    &&\hspace{2cm}+\Big{(}k_1C\frac{1}{\varepsilon^{q-1}q}+K\frac{p}{q}\frac{1}{\varepsilon^{\frac{q-p}{q}}}\Big{)}\,
    \mathbb{E} \Big(\int_0^{T\wedge\tau}\int_Z|\xi(s,z)|^{p}_E\;N(\mathrm{d} s,\mathrm{d} z)\Big)^{\frac{q}{p}}.
\end{eqnarray*}
Now we can choose a suitable positive number $\varepsilon$ such that
\begin{align*}
    \left(k_1C\frac{q-1}{q}+K \frac{q-p}{q}\right)\varepsilon=\frac{1}{2}.
\end{align*}
Consequently, there exists $C$ which is independent of $A$ such that
\begin{align}\label{inequality_4}
       \mathbb{E} \sup_{0 \leq t \leq T \wedge \tau }|u(s)|^q_E\leq
C\mathbb{E} \left(\int_0^{T\wedge\tau}\int_Z|\xi(s,z)|^{p}_E\;N(\mathrm{d} s,\mathrm{d} z)\right)^{\frac{q}{p}}.
\end{align}

\textbf{Case II.} Now suppose $\xi\in
\mathcal{M}^p([0,T]\times Z; \hat{\mathcal{P}};E) $. Set
$R(n,A)=(n I-A)^{-1}$, $n\in\mathbb{N}$. Then we put
$\xi^n(t,\omega,z)=nR(n,A)\xi(t,\omega,z)$ on $[0,T]\times\Omega\times Z$. Since
$A$ is the infinitesimal generator of the $C_0$-semigroup $S(t)$,
$t\geq0$ of contractions, by the Hille-Yosida Theorem,
$\|R(n,A)\|\leq\frac{1}{n}$ and $\xi^n(t,\omega,z)\in\mathcal{D}(A)$,
for every $(t,\omega,z)\in[0,T]\times\Omega\times Z$. Moreover,
$\xi^n(t,\omega,z)\rightarrow\xi(t,\omega,z)$ pointwise on
$[0,T]\times\Omega\times Z$. Also, we observe that
$|\xi^n-\xi|=|nR(n,A)\xi-\xi|\leq 2|\xi|$. Therefore, by
applying the Lebesgue DCT, it follows  that $\mathbb{P}$-a.s.
\begin{align*}
   \mathbb{E} \int_0^T\int_Z1_{(0,\tau]}(s)\, |\xi^n(s,z)-\xi(s,z)|_E^p\;\nu(\mathrm{d} z)\mathrm{d} s \;\to \;0 \mbox{ as } n\rightarrow\infty.
\end{align*} Since
the Poisson random measure $N$ is a $\mathbb{P}$-a.s.  positive and
\begin{align*}
   \mathbb{E} \int_0^T\int_Z1_{(0,\tau]}(s)\,|\xi^n(s,z)-\xi(s,z)|^p_E\;N(\mathrm{d} s,\mathrm{d} z)=\mathbb{E} \int_0^T\int_Z1_{(0,\tau]}|\xi^n(s,z)-\xi(s,z)|^p_E\;\nu(\mathrm{d} z)\mathrm{d} s,
\end{align*}
we see that $\mathbb{P}\text{-a.s.}$
\begin{align*}
      \int_0^T\int_Z1_{(0,\tau]}(s)\,|\xi^n(s,z)-\xi(s,z)|^p\;N(\mathrm{d} s,\mathrm{d} z)\rightarrow0,\ \
      \text{as }n\rightarrow\infty.
\end{align*}
Let us fix $n\in\mathbb{N}$.  Clearly, $\xi^n\in\mathcal{M}^p([0,T]\times
Z;\mathcal{D}(A))$ and so  we may define a process $u^n$ by
\begin{align*}
    u^n(t)=\int_0^tS(t-s)\xi^n(s,z)\,\tilde{N}(\mathrm{d} s,\mathrm{d} z),\; t\in [0,T].
\end{align*}
 Since by Lemma \ref{lemma:strong solution}, the process $u^n$ is a  strong solution  of equation \eqref{SDE} with the process $\xi$ replaced by $\xi^n$,  we infer that it  is an $E$-valued
c\`{a}dl\`{a}g. Hence by inequality \eqref{inequality_4},  for every  stopping time $\tau\geq0$ the following inequality holds
\begin{align*}
           \mathbb{E} \sup_{0 \leq t \leq T \wedge \tau }|u^n(t)|^q\leq C\,\mathbb{E} \left(\int_0^{T\wedge\tau}\int_Z|\xi^n(s,z)|^p_E\,N(\mathrm{d} s,\mathrm{d} z)\right)^{\frac{q}{p}}.
\end{align*}
On the other hand, since by inequality \eqref{Ito-isometry}, we have
\begin{align}\label{eq-a1}
    \mathbb{E} |u^n(t)-u(t)|_E^p
   &=\mathbb{E} \left|\int_0^t\int_Z\Big{(}S(t-s)\xi^n(s,z)-S(t-s)\xi(s,z)\Big{)}\,\tilde{N}(\mathrm{d} s,\mathrm{d} z)\right|_E^p\nonumber\\
   &\leq C_p\,\mathbb{E} \int_0^T\int_Z|\xi^n(s,z)-\xi(s,z)|^p_E\,\nu(\mathrm{d} z)\,\mathrm{d} s,\; t\in [0,T],
\end{align}
we deduce that for
 $t \in [0,T]$,  $u^n(t)$ converges to $u(t)$ in $L^p(\Omega)$. Moreover, since according to \eqref{inequality_4},
\begin{align*}
    \mathbb{E} \sup_{t \geq 0}|u^n(t)-u^m(t)|^q_E\leq C\mathbb{E} \left(\int_0^T\int_Z|\xi^n(s,z)-\xi^m(s,z)|^{p}_E\,N(\mathrm{d} s,\mathrm{d} z)\right)^{\frac{q}{p}}
\end{align*}
 and $\xi^n(t,\omega,z)\rightarrow\xi(t,\omega,z)$  on $[0,T]\times\Omega\times Z$, we infer that the right hand-side of last
inequality converges to $0$ as $n,m\rightarrow\infty$. Hence,
it is possible to choose a sequence $\{n_k\}_{k=1}^{\infty}$ of natural numbers  such that
\begin{align*}
    \mathbb{E} \sup_{t \geq 0}|u^{n_{k+1}}(t)-u^{n_k}(t)|_E^q<\frac{1}{k^{2q+2}}.
\end{align*}
Hence, on the basis of Chebyshev inequality, we obtain
\begin{align*}
       \mathbb{P}\left\{\sup_{t \geq 0}|u^{n_{k+1}}(t)-u^{n_k}(t)|_E>\frac{1}{k^2}\right\}\leq k^{2q}\mathbb{E} \sup_{t \geq 0}|u^{n_{k+1}}(t)-u^{n_k}(t)|_E^q<\frac{1}{k^2}.
\end{align*}
Thus the series $\sum_{k=1}^{\infty}\mathbb{P}\left\{\sup_{0\leq
t\leq T}|u^{n_{k+1}}(t)-u^{n_k}(t)|_E>\frac{1}{k}\right\}$ is convergent. It follows from the Borel-Cantelli Lemma that with
probability $1$ there exists an integer $k_0$ such that
\begin{align*}
       \sup_{t \geq 0}|u^{n_{k+1}}(t)-u^{n_k}(t)|_E\leq \frac{1}{k^2}\text{, for all }k\geq k_0.
\end{align*}
Consequently, the series of c\`{a}dl\`{a}g processes
\begin{equation*}
\sum_{k=1}^{\infty}[u^{n_{k+1}}(t)-u^{n_k}(t)],\;\in [0,T]
\end{equation*}
 converges, $\mathbb{P}$-a.s.,
uniformly on $[0,T]$,  to a c\`{a}dl\`{a}g
process  which we shall denote by
$\tilde{u}=(\tilde{u}(t))_{t \geq 0}$. In view of Lemma \ref{lem:separability}, it is possible to assume that the process $\tilde{u}$ is separable. Thus, the function $\sup_{t \geq 0}|\tilde{u}(t)|^q$ is also measurable. Moreover, we have
 \begin{align}\label{eq-a2}
     \mathbb{E} \sup_{t \geq 0}|u^{n_k}(t)-\tilde{u}(t)|_E^q\rightarrow 0,\ \ \ \text{as }n_k\rightarrow\infty.
 \end{align}
 Therefore, by the Minkowski Inequality and inequality \eqref{inequality_4}, we have
 \begin{align*}
      \Big[\mathbb{E} \sup_{0\leq s\leq T\wedge\tau}|\tilde{u}(t)|_E^q\Big]^{\frac{1}{q}}&\leq \Big[\mathbb{E} \sup_{0 \leq t \leq T \wedge \tau }|\tilde{u}(t)-u^{n_k}(t)|_E^q\Big]^{\frac{1}{q}}+\Big[\mathbb{E} \sup_{0 \leq t \leq T \wedge \tau }|u^{n_k}(t)|_E^q\Big]^{\frac{1}{q}}\\
      &\leq \Big[ \mathbb{E} \sup_{0 \leq t \leq T \wedge \tau }|\tilde{u}(t)-u^{n_k}(t)|_E^q\Big]^{\frac{1}{q}}+\Big[C\,\mathbb{E} \Big(\int_0^{T\wedge\tau}\int_Z|\xi^{n_k}(s,z)|^{p}_E\,N(\mathrm{d} s,\mathrm{d} z)\Big)^{\frac{q}{p}}\Big]^{\frac{1}{q}}.
 \end{align*}
 Note that the constant $C$ on the right hand side of the above inequality does not depend on operator $A$. So the constant $C$ remains the same
  for every $n$. It follows by letting $n_k\rightarrow\infty$ in above inequality that
  \begin{align*}
          \mathbb{E} \sup_{0 \leq t \leq T \wedge \tau }|\tilde{u}(t)|^q\leq C\, \mathbb{E} \Big(\int_0^{T\wedge\tau}\int_Z|\xi(s,z)|^{p}_E\,N(\mathrm{d} s,\mathrm{d} z)\Big)^{\frac{q}{p}}.
  \end{align*}
  Also, by Minkowski inequality we have for every $t \geq 0$,
\begin{align*}
   \left(\mathbb{E} |\tilde{u}(t)-u(t)|^p_E\right)^{\frac{1}{p}}&\leq \left(\mathbb{E} |\tilde{u}(t)-u^{n_k}(t)|^p_E\right)^{\frac{1}{p}}+\left(\mathbb{E} |u(t)-u^{n_k}(t)|^p_E\right)^{\frac{1}{p}}\\
&\leq \left(\mathbb{E} |\tilde{u}(t)-u^{n_k}(t)|^q_E\right)^{\frac{1}{q}}+\left(\mathbb{E} |u(t)-u^{n_k}(t)|^p_E\right)^{\frac{1}{p}}\\
&\leq \left(\mathbb{E} \sup_{0\leq t\leq
T}|\tilde{u}(t)-u^{n_k}(t)|^q_E\right)^{\frac{1}{q}}+\left(\mathbb{E} |u(t)-u^{n_k}(t)|^p_E\right)^{\frac{1}{p}}.
\end{align*}
Letting $n\rightarrow\infty$, it follows from \eqref{eq-a1} and \eqref{eq-a2} that $u(t)=\tilde{u}(t)$ in
$L^p(\Omega)$ for any $t \geq 0$. This shows the inequailty
\eqref{inequality} for $q^\prime=q$. The case $q^\prime>q$ follows from the fact
that if Banach space $E$ satisfies Assumption
\ref{assu-01} for some $q$, then Condition $1$ is also satisfied with
$q^\prime>q$.

\end{proof}

The following result could be derived immediately from the proof of
the above result.

\begin{coro}Let $E$ be a Banach space satisfying Assumption \ref{assu-01}. Then the stochastic convolution
process $u$ has a c\`{a}dl\`{a}g modification.
\end{coro}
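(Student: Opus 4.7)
The plan is to observe that this corollary is essentially already established inside Case II of the proof of Theorem \ref{theo_1}. All that remains is to isolate and rerun the construction of the process $\tilde u$ there, so I would structure the proof as follows.

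First, I would approximate the integrand by $\xi^n := n R(n,A)\xi$, where $R(n,A) = (nI-A)^{-1}$ is the resolvent of $A$. By the Hille--Yosida theorem, $\|nR(n,A)\|_{\mathcal{L}(E)}\le 1$, each $\xi^n(s,\omega,z)$ lies in $\mathcal{D}(A)$, and $\xi^n \to \xi$ pointwise with $|\xi^n|\le 2|\xi|$, so $\xi^n \in \mathcal{M}^p([0,T]\times\Omega\times Z,\hat{\mathcal{P}},\lambda\times\mathbb{P}\times\nu;\mathcal{D}(A))$. Defining
\[
u^n(t) = \int_0^t\int_Z S(t-s)\xi^n(s,z)\tilde N(ds,dz),
\]
Lemma \ref{lemma:strong solution} ensures that $u^n$ is a strong solution of \eqref{SDE} with integrand $\xi^n$; the representation \eqref{strong solution_1} shows that the right-hand side is right-continuous, so each $u^n$ has $\mathbb{P}$-a.s. c\`adl\`ag trajectories.

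Next, I would apply the maximal inequality from Case I of Theorem \ref{theo_1} (which is valid here because each $\xi^n$ is $\mathcal{D}(A)$-valued) to the difference $u^n - u^m$, obtaining
\[
\E\sup_{0\le s\le t}|u^n(s) - u^m(s)|_E^q \le C\, \E\left(\int_0^t\!\int_Z |\xi^n(s,z) - \xi^m(s,z)|_E^p\, N(ds,dz)\right)^{q/p},
\]
whose right-hand side tends to $0$ as $n,m\to\infty$ by the Lebesgue dominated convergence argument already spelled out in Case II. A standard Borel--Cantelli step then produces a subsequence $\{n_k\}$ along which $\sup_{0\le s\le T}|u^{n_{k+1}}(s) - u^{n_k}(s)| \to 0$ almost surely, so that $u^{n_k}$ converges uniformly on $[0,T]$ with probability one to some limit process $\tilde u$. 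Since a uniform limit of c\`adl\`ag paths is c\`adl\`ag, the process $\tilde u$ has c\`adl\`ag trajectories $\mathbb{P}$-a.s.

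Finally, I would verify that $\tilde u$ is a modification of $u$. For every fixed $t \in [0,T]$ the Itô isometry \eqref{Ito-isometry} yields $u^n(t) \to u(t)$ in $L^p(\Omega;E)$, while the construction gives $u^{n_k}(t)\to \tilde u(t)$ a.s. and in $L^q(\Omega;E) \hookrightarrow L^p(\Omega;E)$; combining both via Minkowski's inequality identifies $u(t) = \tilde u(t)$ in $L^p(\Omega;E)$, hence $\mathbb{P}$-almost surely. The only delicate point -- which is in any case handled by the Borel--Cantelli argument above -- is the passage from $L^q$ convergence of the suprema to the pathwise uniform convergence needed to preserve the c\`adl\`ag property; beyond this, no new obstacle arises since the analytic core of the argument is already contained in Theorem \ref{theo_1}.
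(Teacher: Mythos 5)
Your proposal is correct and takes essentially the same approach as the paper: the paper's entire proof of this corollary is the remark that it follows immediately from the proof of Theorem \ref{theo_1}, and you have simply isolated and rerun the Case~II construction there (Yosida approximants $\xi^n = nR(n,A)\xi$, the Case~I maximal inequality applied to $u^n - u^m$, Borel--Cantelli with uniform convergence of c\`{a}dl\`{a}g paths, and the $L^p$-identification of $\tilde{u}$ as a modification of $u$). Nothing further is needed.
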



\begin{coro}\label{coro_theo_2}
\label{highq} Let $E$ be a Banach space
satisfying Assumption \ref{assu-01}. Let $\sqrt{2}\leq p\leq 2$. Then for any $n\in \mathbb{N}$ with $p^n\geq q$,
there exists a constant $C=C(E,n)$ such that for every
$\xi\in \bigcap_{k=1}^n\mathcal{M}^{p^k}_{\loc}([0,\infty)\times Z; \hat{\mathcal{P}};E)
 $ and for
every stopping time $\tau>0$ and $t\geq 0$,
\begin{align}\label{inequality_13}
      \mathbb{E} \sup_{0\leq s\leq t \wedge \tau }|\tilde{u}(s)|_E^{p^n}\leq C\
     \sum_{k=1}^n{\mathbb{E} }\left(\int_0^{t \wedge \tau}\int_Z |\xi(s,z)|_E^{p^k}\,\nu (\mathrm{d} z) \mathrm{d} s\right)^{{p^{n-k}}},
\end{align}
where $\tilde{u}$ is a separable and c\`{a}dl\`{a}g modification of $u$ as
before.
\end{coro}
The proof of Corollary \ref{coro_theo_2}  is similar to the proof
Lemma 5.2 in Bass and Cranston \cite{[Bass+Cran_1986]} or of Lemma
4.1 in Protter and Talay \cite{[Talay_1997]}. The two essential ingredients
of that proof are formulated below. The latter one is
about integration  of real valued processes.

\begin{lemma}\label{claim-2} Let $E$ be a martingale type $p$ Banach space, $1<p\leq 2$, satisfying Assumption \ref{assu-01}. Let $\tau>0$ be a  stopping time.
For any $q^\prime \geq q$, there exists a constant $C$ such that,
for
all $\xi\in \mathcal{M}^p_{\loc}([0,\infty)\times Z; \hat{\mathcal{P}};E) $, we have

\begin{equation}\label{main-ineq}
\mathbb{E} \sup_{0\leq s\leq t\wedge\tau}\left| \int_0^s\int_Z
\xi(r,z)\,\tilde N(\mathrm{d} r,\mathrm{d} z)\right|_{E}^{q^\prime}\le C\, \mathbb{E} \left( \int_0^{t\wedge\tau}\int_Z
|\xi(s,z)|_E ^p\,
             N(\mathrm{d} s,\mathrm{d} z)\right)^\frac {q^\prime}{p}, \; t \geq 0.
\end{equation}
\end{lemma}

\begin{proof}[Proof of Lemma \ref{claim-2}] This result is a special case of Theorem \ref{theo_1}  with $S(t)=I$, $t \geq 0$.
\end{proof}
\begin{lemma}\label{claim-1}
Let $\sqrt{2}\leq p\leq 2$. For any $n\in\mathbb{N}$ there exists  a constant  $D_n>0$ such that for any
process $$f\in\bigcap_{k=1}^n \mathcal{M}^{p^k}_{\loc}([0,\infty)\times Z; \hat{\mathcal{P}};\mathbb{R}), $$
and all $t \geq 0$ and stopping times $\tau>0$,  the following inequality holds
\begin{equation}\label{inequality_14}
    \mathbb{E} \sup_{0\leq s\leq t\wedge\tau}\left|\int_0^{s}\int_Z f(r,z)\,\tilde N (\mathrm{d} r,\mathrm{d} z)\right|^{{p^{n}}}
    \leq D_n
     \sum_{k=1}^n \mathbb{E} \left(  \int_0^{t\wedge\tau}\int_Z|f(s,z)|^{p^k}\,\nu(\mathrm{d} z)  \mathrm{d} s\right)^{p^{n-k}}.
     \end{equation}
\end{lemma}

\begin{proof}[Proof of Lemma \ref{claim-1}] We shall show this Lemma by induction. The case $n=1$ follows from \cite{[Brz+Haus_2009]}. Now we assume that the assertion in the Claim is true for $n-1$, where $n\in\mathbb{N}$ and $n\geq 2$. We will show that it is still true for $n$.
Since by assumption $f\in\mathcal{M}^p([0,T]\times
Z;
\hat{\mathcal{P}};\mathbb{R})$,
so both integrals $\int_0^t\int_Z |f(s,z)|^pN(ds,dz)$ and
$\int_0^t\int_Z|f(s,z)|^p\nu(dz)ds$ are well defined as
Lebesgue-Stieltjes integrals. Moreover, we have, for every $t\geq 0$ and stopping time $\tau>0$,
 \begin{align}\label{eq-12}
 \int_0^{t\wedge\tau}\int_Z |f(s,z)|^p\,\tilde{N}(\mathrm{d} s,\mathrm{d} z)=\int_0^{t\wedge\tau}\int_Z |f(s,z)|^p\,N(\mathrm{d} s,\mathrm{d} z)-\int_0^{t\wedge\tau}\int_Z|f(s,z)|^p\,\nu(\mathrm{d} z)\mathrm{d} s\ \ \mathbb{P}\text{-a.s..}
 \end{align}
Since for $n\geq 2$, $2\leq p^n\leq 2^n$, so the function $|\cdot|_{\mathbb{R}}^{p^n}$ satisfies Assumption \ref{assu-01} with $q=p^n$.
Hence by using first the inequality \eqref{main-ineq} and then inequality \eqref{eq-12},  we infer \begin{eqnarray}\label{hmus} \nonumber
\lefteqn{  \mathbb{E} \sup_{0\leq s\leq {t\wedge\tau}}\left|\int_0^{s}\int_Z f(r,z)\tilde N
(\mathrm{d} r,\mathrm{d} z)\right|^{{p^{n}}}
    \leq C\,\mathbb{E} \left|\int_0^{t\wedge\tau}\int_Z |f(s,z)|^{p}\, N (\mathrm{d} s,\mathrm{d} z)\right|^{{p^{n-1}}}
} &&
\\\nonumber
&\leq & 2^{p^{n-1}}C \,\Big\{
 \mathbb{E}  \left(
       \int_{0 } ^{t\wedge\tau}\int_Z |f (s,z)|^p  \;\tilde N  (\mathrm{d} s,\mathrm{d} z)  \right)^{p ^{n-1}}
 + \mathbb{E}  \left( \int_{0 } ^{t\wedge\tau}\int_Z  |f (s,z)|^p  \;\nu (\mathrm{d} z) \, \mathrm{d} s\right)^{p ^{n-1}}\Big\}
\end{eqnarray}

Next, by applying the induction assumption to the real valued process
$$|f|^p\in  \bigcap_{k=1}^{n-1}\mathcal{M}^{p^k}_{\loc}([0,\infty)\times Z; \hat{\mathcal{P}};\mathbb{R})
,$$
we get
\begin{eqnarray}\label{hmus2}\nonumber
&&\hspace{-1truecm}\lefteqn{ \mathbb{E} \sup_{0\leq s\leq {t\wedge\tau}}\left|\int_0^{s}\int_Z f(r,z)\tilde N
(\mathrm{d} r,\mathrm{d} z)\right|^{{p^{n}}}}
\\
\nonumber
&\leq & 2^{p^{n-1}}C \, \Big( D_{n-1}\sum_{i=1}^{n-1} \mathbb{E}  \Big(
       \int_{0 } ^{t\wedge\tau}\int_Z |f (s,z)|^{p^{i+1}}  \; \nu (\mathrm{d} z) \, \mathrm{d} s  \Big)^{p ^{n-1-i}}
\\
\nonumber & +& \mathbb{E}  \Big( \int_{0 } ^{t\wedge\tau}\int_Z  |f (s,z)|^p  \;\nu (\mathrm{d} z) \, \mathrm{d} s\Big)^{p ^{n-1}}
\Big)
\leq D_n\sum_{k=1}^{n}\mathbb{E}   \Big(
       \int_{0 } ^{t\wedge\tau}\int_Z |f (s,z)|^{p^{k}}  \; \nu (\mathrm{d} z) \, \mathrm{d} s \Big)^{p^{n-k}} .
\end{eqnarray} This proves the validity of the assertion in the Lemma  for
$n$ which completes the whole proof.
\end{proof}

\begin{proof}[Proof of Corollary \ref{coro_theo_2}] Let us take $n\in\mathbb{N}$. By applying first Theorem \ref{theo_1} and next the equality \eqref{eq-12}
when $\xi\in\mathcal{M}^p([0,T]\times Z; \hat{\mathcal{P}};E)$, we deduce
that for all $t\in [0,T]$, \begin{eqnarray} \nonumber \mathbb{E}  \sup_{0\le s\le t} |
\tilde u(s) | _E^{p^{n}} &\leq& C\: \mathbb{E}  \left(  \int_{0} ^t \int_Z\left|
\xi(s,z)\right|_E ^p  N(\mathrm{d} s,\mathrm{d} z )\right) ^ {p^{n-1}}
\\
&\leq& 2^{p^{n-1}}C  \,
 \mathbb{E}  \left(
  \int_{0 } ^t\int_Z |\xi (s,z)|_E^p  \;\tilde N  (\mathrm{d} s,\mathrm{d} z )  \right)^{p ^{n-1}}\nonumber\\
 && +  2^{p^{n-1}}C \  \mathbb{E} \left( \int_{0 } ^t\int_Z  |\xi (s,z)|_E^p  \;\nu (\mathrm{d} z) \, \mathrm{d} s\right)^{p ^{n-1}}\nonumber\\
 &\leq&2^{p^{n-1}}\ C  \,
       D_{n-1}\sum_{k=1}^{n-1}\mathbb{E}    \left(\int_{0 } ^t\int_Z |\xi (s,z)|_E^{p^{k+1}}  \nu(\mathrm{d} z)\,\mathrm{d} s\right)^{p ^{n-1-k}}\nonumber\\
&& + 2^{p^{n-1}}\ C \ \mathbb{E}   \  \left( \int_{0 } ^t\int_Z  |\xi (s,z)|_E^p  \;\nu (\mathrm{d} z) \, \mathrm{d} s\right)^{p ^{n-1}}\nonumber\\
&\leq&C(n)\sum_{k=1}^n\mathbb{E} \left(\int_{0 } ^t\int_Z |\xi (s,z)|_E^{p^{k}}
\nu(\mathrm{d} z)\,\mathrm{d} s\right)^{p ^{n-k}},\nonumber \end{eqnarray} where we used in the third
inequality Lemma \ref{claim-2} with $f$ replaced by real-valued
process $\xi$ such that
$$|\xi|^p_E\in
\bigcap_{k=1}^{n-1}\mathcal{M}^{p^k}_{\loc}([0,\infty)\times Z; \hat{\mathcal{P}};\mathbb{R})
.
$$
This completes the proof of Corollary \ref{coro_theo_2}.
\end{proof}

\section{Extension to progressively measurable integrands}\label{sec-pm}

Corollary \ref{coro_theo_2} can be generalized to integrands which are progressively measurable processes.  Let us recall that
a process $\xi:[0,T]\times \Omega\times Z\to E$ is $\mathbb{F}\otimes \mathcal{Z}$-progressively measurable, if $\xi$ is
$\mathcal{BF}\otimes \mathcal{Z}/\mathcal{B}(E)$-measurable, where, see \cite[section 6.5]{Wentzell}, $\mathcal{BF}$ is the $\sigma$-field consisting of all sets $A\subset  [0,T]\times \Omega$ such that for every $t\in [0,T]$, the set $A\cap \big( [0,t]\times \Omega)$ belongs to the sigma field $\mathcal{B}_{[0,t]}\otimes \mathcal{F}_t$. Note that $\mathcal{BF}\otimes \mathcal{Z}$ is the $\sigma$-field generated by a family  of all sets $A\subset  [0,T]\times \Omega \times Z$ such that for every $t\in [0,T]$, the set $A\cap \big( [0,t]\times \Omega \times Z)$ belongs to the sigma field $\mathcal{B}_{[0,t]}\otimes \mathcal{F}_t \otimes \mathcal{Z}$.\\
For $p\in [1,\infty)$, the set  of all  of $p$--integrable $\mathcal{BF}\otimes \mathcal{Z}$-progressively processes $\xi:[0,T]\times \Omega\times Z\to E$ will be denoted by \[\mathcal{M}^{p}([0,T]\times Z; \mathcal{BF}\otimes \mathcal{Z};E)\] and the Banach space of
 all equivalence classes of   $p$--integrable $\mathcal{BF}\otimes \mathcal{Z}$-progressively processes $\xi:[0,T]\times \Omega\times Z\to E$ will be denoted by \[\mathbb{M}^{p}([0,T]\times Z; \mathcal{BF}\otimes \mathcal{Z};E).\]

As noted in Remark \ref{rem-predictable versus progressively measurable}, the It\^o integral with respect to a compensated Poisson random measure of  processes from the class has been introduced in \cite{[Brz+Haus_2009]}, see also \cite[Theorem 3.2.27]{Zhu_2010_PhD thesis}.
The following follows from
\cite[Theorem 3.2.27]{Zhu_2010_PhD thesis}.

\begin{proposition}\label{prop-pm}
 If  $p\in [1,\infty)$  and a  progressively measurable    process $\xi:[0,T]\times \Omega\times Z\to E$ belongs to  $ \mathcal{M}^{p}([0,T]\times Z; \mathcal{BF}\otimes \mathcal{Z};E)$
then
there exists a sequence of c\`{a}gl\`{a}d step functions $\xi_n\in
\mathcal{M}^p_{step}([0,T]\times Z;\hat{\mathcal{P}};E)$, such that $\xi_n\to \xi$ in $\xi \in \mathcal{M}^{p}([0,T]\times Z; \mathcal{BF}\otimes \mathcal{Z};E)$, as $n\rightarrow\infty$.
\end{proposition}

\begin{coro}\label{coro_theo_progr}
\label{highq-1} Let $E$ be a Banach space
satisfying Assumption \ref{assu-01}. Let $\sqrt{2}\leq p\leq 2$ and  $n\in \mathbb{N}$ such that  $p^n\geq q$. Then for for every
$\xi\in \cap_{k=1}^n\mathcal{M}^{p^k}([0,T]\times Z; \mathcal{BF}\otimes \mathcal{Z};E)
 $ there exists a process $\tilde{u}$ which is  a separable and c\`{a}dl\`{a}g modification of the stochastic convolution process $u$ defined, as before, by \eqref{sto_convolution}. Moreover,
there exists a constant $C=C(E,n)$ independent of $\xi$, such that
for every  stopping time $\tau$ and $t\in[0,T]$,  inequality \eqref{inequality_13} holds true.
\end{coro}
\begin{proof} By Proposition \ref{prop-pm}, there exists a sequence
$\{ \xi_i:i\in\mathbb{N}\}\subset \cap_{k=1}^ n\mathcal{M}^{p^k}_{step}([0,T]\times Z;\hat{\mathcal{P}};E)$ of c\`{a}gl\`{a}d processes
 convergent  to $\xi$ in $\cap_{k=1}^n\mathcal{M}^{p^k}([0,T]\times Z; \mathcal{BF}\otimes \mathcal{Z};E)$.
By Theorem \ref{theo_1}, for every $i$, the exists a  separable c\`{a}dl\`{a}g
 modification $\tilde{u}_i$ of the process $u_i$
being the solution of the Problem
\begin{align}\label{sc-01l}
   u_i(t)=\int_0^t\int_ZS(t-s)\xi_i(s,z)\tilde{N}(\mathrm{d} s,\mathrm{d} z),\ t\in[0,T].
\end{align}
which satisfies
\begin{align}\label{inequality_13_n}
      \mathbb{E} \sup_{0\leq s\leq t \wedge \tau }|\tilde{u}_i(s)|_E^{p^n}\leq C\
     \sum_{k=1}^n{\mathbb{E} }\left(\int_0^{t \wedge \tau}\int_Z |\xi_i(s,z)|_E^{p^k}\,\nu (\mathrm{d} z) \mathrm{d} s\right)^{{p^{n-k}}},\ t\in[0,T], \quad l\in\mathbb{N}.
\end{align}
and, for all $i,j\in \mathbb{N}$,
\begin{align}\label{inequality_14_n}
      \mathbb{E} \sup_{0\leq s\leq t \wedge \tau }|\tilde{u}_i(s)-\tilde{u}_j|_E^{p^n}\leq C\
     \sum_{k=1}^n{\mathbb{E} }\left(\int_0^{t \wedge \tau}\int_Z |\xi_i(s,z)-\xi_j(s,z)|_E^{p^k}\,\nu (\mathrm{d} z) \mathrm{d} s\right)^{{p^{n-k}}},\ t\in[0,T], \quad l\in\mathbb{N}.
\end{align}
Arguing as in the proof of Theorem \ref{theo_1} we can conclude the proof.
 \end{proof}

\section{Final comments}

Inequality \eqref{SC} can also be derived by the method used by the third  named author and Seidler in  \cite{[Haus+Seidler]}, see as inequality (4)  therein. These authors used the Szek\"{o}falvi-Nagy's Theorem on unitary dilations in Hilbert spaces. However, this method  works only for  analytic  semigroups of contraction type while  the results from the current paper are valid for all  $C_0$ semigroups of contraction type. Let us now formulate the following result whose proof is a clear combination of the proofs from \cite{[Haus+Seidler]} and \cite{[Fr+Weis]}. For the explanation of the terms used we refer the reader to the latter work. Similar observation for processes driven by a Wiener process was made independently by Seidler \cite{seidler}.

\begin{theorem}\label{dil}
Let $E$ be a martingale type $p$ Banach space, where $1<p\leq 2$.
Let $-A$ be a generator of a bounded analytic semigroup in $E$ such that for some $\theta<\tfrac12\pi$, the operator
 $A$ has a bounded $H ^ \infty(S_\theta)$ calculus.
Then,
for any $0<q^\prime<\infty$, there exists a constant $C$ such that
for
all $\xi\in \mathcal{M}^p_{\loc}([0,\infty)\times Z; \hat{\mathcal{P}};E)$ and for
every  stopping time $\tau>0$, we have

\begin{eqnarray}\label{main-ineq2}
\mathbb{E} \sup_{0\leq s\leq t  \wedge \tau}\left| \int_0^s\int_Z S(s-r)
\xi(r,z)\,\tilde N(\mathrm{d} r,\mathrm{d} z)\right|_{E}^{q^\prime}\le C\, \mathbb{E} \left( \int_0^{t \wedge \tau} \int_Z
|\xi(s,z)|_E ^p\,
             N(\mathrm{d} s,\mathrm{d} z)\right)^\frac {q^\prime}{p}, \quad t \geq 0.
\end{eqnarray}
\end{theorem}

The following result could be derived immediately from the proof of
above theorem.

\begin{coro}Let  $E$ be a martingale type $p$ Banach space, where $1<p\leq 2$. Let $-A$ be a generator of a bounded analytic semigroup in $E$ such that for some $\theta<\tfrac12\pi$ the operator $A$ has a bounded $H ^ \infty(S_\theta)$ calculus.
Then,
the stochastic convolution
process $u$ defined by \eqref{SC} has c\`{a}dl\`{a}g modification.
\end{coro}

\appendix

\section{Appendix}\label{sec-appendix}

\begin{defi}
A Banach space $E$ with norm $\|\cdot\|$ is of martingale type $p$, for $p\in (1,2]$ if and only if there exists a constant $C_p(E)>0$ such that for any $E$-valued discrete martingale $\{M_k\}_{k=1}^n$ the following inequality holds
\begin{align}
      \mathbb{E} \|M_n\|^p\leq C_p(E)\sum_{k=0}^n\mathbb{E} \|M_k-M_{k-1}\|^p,
\end{align}
with $M_{-1}=0$ as usual.
\end{defi}
\begin{remark}
Any separable Hilbert space is of martingale type $2$  with
\begin{align*}
      \mathbb{E} \|M_n\|^2=\sum_{k=0}^n\mathbb{E} \|M_k-M_{k-1}\|^2.
\end{align*}
 If $E$ and $F$ are isomorphic Banach spaces, then $E$ is of martingale type $p$ if and only if $F$ is of martingale type $p$.
\end{remark}
The following definition of $2$-smooth Banach spaces in terms of asymptoticity of the modulus of smoothness of the norm can be found in \cite{Pisier} and \cite{Pisier86}.
\begin{defi} A Banach space $E$ is $p$-smooth if there exists an equivalent norm defined by the modulus of smoothness of $(E,\|\cdot\|)$
\begin{align*}
     \rho_E(t)=\sup\{\frac{1}{2}(\|x+ty\|+\|x-ty\|)-1:\|x\|=\|y\|=1\}
\end{align*}
satisfying $\rho_E(t)\leq Kt^p$ for all $t>0$ and some $K>0$.
\end{defi}

\begin{remark}A Banach space is of martingale type $p$ if and only if it is $p$-smooth, see \cite{Pisier}.  Hence all spaces $L^q(\mu)$, for $q\in[p,\infty)$ and $q>1$ with an arbitrary positive measure $\mu$ are of martingale type $p$. Note that any closed subspaces of martingale type $p$ spaces are of martingale type $p$. So the Sobolev spaces $W^{k,q}$, for $q\in[p,\infty)$ and $k>0$ are of martingale type $p$.
\end{remark}
The following Lemma can be found in \cite{[Neerven+Zhu]}.
\begin{lemma}\label{lem-p-smooth}
A Banach space $E$ is $p$-smooth, $1<p\leq 2$, if and only if the Fr\'{e}chet derivative of the norm function $x\mapsto\n{x}^p$ is globally $(p-1)$-H\"{o}lder continuous on $E$.
\end{lemma}

\begin{lemma}\label{lem-7.6} If a real separable Banach space $E$ satisfies Assumption \eqref{assu-01}, then $E$ is of martingale type $p$, for all $p\in(1,2]$.
\end{lemma}

\begin{proof}[Proof of Lemma \ref{lem-7.6}] see \cite{Pisier86} It is sufficient to consider the case $p=2$, see \cite{Pisier86}. Let $E$ be a Banach space with norm $\n{\cdot}$. We assume that $q>2$ and that the function
\[
\psi:E \ni x \mapsto \n{x}^q\in \mathbb{R}
\]
is of $C^2$-class and satisfies the standard assumptions, i.e.
\[
\n{\psi^\prime(x)} \leq C_1 \n{x}^{q-1}, \;\;
\n{\psi^{\prime\prime}(x)} \leq C_2 \n{x}^{q-2}, \, x \in E.
\]
We consider a function
\[
\phi:E \ni x \mapsto \n{x}^q\in \mathbb{R}.
\]
We claim that $\phi$ is of $C^1$ class and of $C^2$ class on $E\setminus\{0\}$, and $\phi^\prime$ is globally Lipschitz continuous on $E$.\\
To see this, observe first by chain rule that for any $x\in E\setminus\{0\}$,
\[
\phi^\prime(x)=\frac{2}{q}\big[ \psi(x)\big]^{\frac{2}{q}-1} \psi^\prime(x).
\]
Thus,
\[
\n{\phi^\prime(x)} \leq C \big(  \n{x}^{q} \big)^{\frac{2}{q}-1} \n{x}^{q-1}=C \n{x}.
\]
In particular, $\lim_{x\to 0}\n{\phi^\prime(x)} \to 0$  and thus $\phi$ is differentiable at $0$ and $d_0\phi=0$.

Applying the chain rule again, we have for $x\in E\setminus\{0\}$
\begin{equation*}
\phi^{\prime\prime}(x)=
\frac{2}{q}(\frac{2}{q}-1)\big[ \psi(x)\big]^{\frac{2}{q}-2} \psi^\prime(x)\otimes \psi^\prime(x)
+\frac{2}{q}\big[ \psi(x)\big]^{\frac{2}{q}-1} \psi^{\prime\prime}(x)
\end{equation*}
As above, using the assumptions of the derivatives of $\psi$ we infer that there exists $C>0$ such that
\[
\n{\phi^{\prime\prime}(x)} \leq C, \;\; x \in E\setminus\{0\}.
\]
For any $x,y\in E\setminus\{0\}$, by applying the mean value Theorem, see e.g. \cite{[Cartan]}, we have
\begin{align*}
     \n{\phi^{\prime}(x)-\phi^{\prime}(y)}=\n{\phi^{\prime\prime}(\theta)(x-y)}\leq C\n{x-y},
\end{align*}
where the point $\theta$ lies on the same line segment between $x$ and $y$. Hence the first derivative $\phi^{\prime}$ is globally Lipschitz continuous. By applying Lemma \ref{lem-p-smooth}, we infer that the Banach space $E$ is $2$-smooth and  hence it is of martingale type $2$.
\end{proof}

\begin{acknowledgements}
Preliminary versions of this work were presented at the First
CIRM-HCM Joint Meeting on Stochastic Analysis and SPDE's which was
held at Trento (January 2010). The research of the first named
author was partially supported by an ORS award at the University of
York. Results presented in this  article are included in  the
PhD thesis of the first named author.  This work was supported by
the FWF-Project P17273-N12. Part of the work was done at the Newton
Institute for Mathematical Sciences in Cambridge (UK), whose support is gratefully acknowledged, during the
    program "Stochastic Partial Differential Equations". The second named author wishes to thank Clare Hall (Cambridge)  for hospitality.
    The first and second named authors wish to thank University of Salzburg  for hospitality.
   Finally, the authors acknowledge that the comments and suggestions of Anna Chojnowska-Michalik made for the PhD thesis of the first named author have also influenced the final presentation of this paper.
   The authours would like to thank an anonymous referee and the Associated Editor for the useful comments which greatly enhanced the quality of the presentation.
\end{acknowledgements}

\end{document}